\def\section{\@startsection{section}{1}%
  \z@{1.1\linespacing\@plus\linespacing}{.8\linespacing}%
  {\normalfont\Large\scshape\centering}}
\theoremstyle{plain}
\newtheorem*{conj*}{Root Groups Conjecture}
\newtheorem*{thm1.2}{(1.2) Theorem}
\newtheorem*{thm1.3}{(1.3) Theorem}
\newtheorem*{thm1.4}{(1.4) Theorem}
\newtheorem*{prop*}{Proposition}
\newtheorem{prop}{Proposition}[section]
\newtheorem{thm}[prop]{Theorem}
\newtheorem{cor}[prop]{Corollary}
\newtheorem{lemma}[prop]{Lemma}
\theoremstyle{definition}
\newtheorem{definition}[prop]{Definition}
\newtheorem{hypothesis}[prop]{Hypothesis}
\newtheorem*{Def*}{Definition}
\newtheorem{notation}[prop]{Notation}
\newtheorem*{notation*}{Notation}
\newtheorem{remark}[prop]{Remark}
\newcommand{\Cen}{\rm Cen}
\newcommand{\cala}{\mathcal{A}}
\newcommand{\calg}{\mathcal{G}}
\newcommand{\qq}{\mathbb{Q}}
\newcommand{\zz}{\mathbb{Z}}
\newcommand{\fix}{\mathrm{fix}}
\newcommand{\ga}{\alpha}
\newcommand{\gb}{\beta}
\newcommand{\gc}{\gamma}
\renewcommand{\gg}{\gamma}
\newcommand{\gd}{\delta}
\newcommand{\gre}{\epsilon}
\newcommand{\gl}{\lambda}
\newcommand{\sminus}{\setminus}
\newcommand{\lan}{\langle}
\newcommand{\ran}{\rangle}
\newcommand{\Tr}{{\rm Tr}}
\numberwithin{equation}{section}
\begin{document}
\title[A non-split sharply $2$-transitive group]{Sharply $2$-transitive groups
in characteristic~$0$}
\author[Eliyahu Rips,  Katrin Tent]{Eliyahu Rips$^1$\qquad Katrin Tent$^2$}

\address{Eliyahu Rips\\
       Einstein Institute of Mathematics\\
        Hebrew University \\
        Jerusalem 91904\\
        Israel}
\email{eliyahu.rips@mail.huji.ac.il}
\thanks{$^1$This research was partially supported by the Israel Science Foundation}

\address{Katrin Tent \\
         Mathematisches Institut \\
         Universit\"at M\"unster \\
	 Einsteinstrasse 62\\
         48149 M\"unster \\
         Germany}
\email{tent@wwu.de}
\thanks{$^2$Partially supported by SFB 878}

\keywords{sharply $2$-transitive, free product, HNN extension, malnormal}
\subjclass[2010]{Primary: 20B22}

\begin{abstract}
We construct sharply $2$-transitive groups of characteristic~$0$ without non-trivial abelian
normal subgroups. These groups act sharply $2$-transitively by conjugation on their involutions. This answers a long-standing open question. 
\end{abstract}

\date{\today}
\maketitle
\section{Introduction}

The finite sharply $2$-transitive groups were classified by Zassenhaus~\cite{Z} in the 1930's. They were shown to all
contain abelian normal subgroups and thus arise from nearfields
in a way similar to $AGL(1,K)$ from a field~$K$.
It remained an open question whether arbitrary sharply $2$-transitive groups necessarily contain an abelian normal subgroup and the first counterexamples were given by \cite{RST}. In the examples constructed there,  involutions have no
fixed points and
are said to have 'characteristic $2$', leaving open the existence of non-split sharply $2$-transitive groups in other characteristics. The situation when involutions of a sharply $2$-transitive group do have fixed points is rather different from the fixed point free setting. In contrast to the result in \cite{RST} which shows that \emph{any} group can be extended to a sharply $2$-transitive group in which involutions are fixed point free, it here turns out that a group has to satisfy a number of necessary conditions in order to be embeddable into a sharply $2$-transitive group in which involutions have fixed points.

Suppose that $G$ acts sharply $2$-transitively on a set $X$. It is easy to see that the set of involutions $J$ of a sharply $2$-transitive group $G$ forms a conjugacy class. If involutions have fixed points, then it is also not hard to see that there is a $G$-equivariant bijection between $J$ and $X$ taking each involution $\iota$ to its unique fixed point $\fix(\iota)$.

Thus, if involutions have fixed points, then $G$ acts sharply $2$-transitively on $J$, implying that all dihedral subgroups
of $G$ are isomorphic, see e.g. \cite{RST} and \cite{Te2} for more background.

 We here construct sharply $2$-transitive groups of characteristic $0$. Here characteristic $0$ means that involutions have fixed points and the product of distinct involutions has infinite order, as is the case in $AGL(1,K)$ for a field of characteristic $0$.

In order to formulate our main result, we introduce the following terminology:
given an involution $j\in G$ we say that involutions
$s,t\in G$ are \emph{equivalent relative to $j$} (and write $s\sim_jt$) if
there are $n,m\in\mathbb{Z}$ such that $(js)^m=(jt)^n$. Note that any involution $s\in\langle j,t\rangle\setminus \{j\}$ is equivalent to $t$ relative to $j$: 
we have $s=(jt)^nj$ or $s=j(jt)^n$ for some $n\in \mathbb{N}$. Hence $js=(jt)^{-n}$ or
$js=(jt)^n$, respectively.
Since $j$ will be fixed throughout this paper, we mostly omit mentioning $j$ in the equivalence relation.
We call an involution $s_1$ \emph{minimal}
if for any involution $s_2$ equivalent to $s_1$ (relative to $j$)
we have $\lan j,s_1\ran\geq \lan j,s_2\ran$.

For any involution $s\in G$, we write $\overline{\lan j,s\ran}^G=\bigcup_{s'\sim_j s}\lan j,s'\ran$. Hence, if $s\in G$ is minimal relative to $j$,
then $\overline{\lan j, s\ran}^G=\lan j,s\ran$. We omit mentioning $G$ if the ambient group is clear from the context.
By a \emph{translation} we will mean a product of two involutions. For a set $A\subset G$ we write $\Tr(A)$ for the set of translations in $A$. Note that we have $\Tr(\overline{\lan j,s\ran})=\{1\}\cup \{js'\colon s' \mbox{ equivalent to } s\}$.

\medskip
Our main result is the following:


\begin{thm}\label{thm s2t in char 0}

Let $G$ be a group containing involutions $j,t$ and $t'$ with $j^{t'}=t$.
Let $A={\Cen}_G(j)$ and assume  that the following holds:
\begin{enumerate} 
\item all involutions in $G$ are conjugate;

\item any two distinct involutions of $G$ generate infinite dihedral groups;

\item for any $n>0$ there is a unique involution $s\in G$ with
$(js)^n=jt$.

\item for any involution $s$ equivalent to $t$  there is some $a\in A$ such that $s=t^a$.

\item for any involution $s$, we have ${\Cen}(js)=\Tr(\overline{\lan j,s\ran}^G)$.

\item for any involution $s\notin AtA$ there is a
unique minimal involution $s'$ equivalent to $s$ relative to $j$.

\end{enumerate}
Then $G$ is contained in a group $\calg$ 
acting sharply $2$-transitively and in characteristic~$0$ on the set of involutions of $\calg$ and such that $\calg$ contains no abelian normal subgroup.

\end{thm}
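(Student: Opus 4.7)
The plan is to build $\calg$ as the direct limit of a chain $G=G_0\subset G_1\subset\cdots$ of groups each satisfying hypotheses (1)--(6), where at each stage $G_{n+1}$ is an HNN extension of $G_n$ that merges one further $\sim_j$-equivalence class of involutions into the class of $t$. By hypothesis~(4) the class of $t$ coincides with the orbit $A\cdot t$; so once every involution in the limit is $\sim_j$-equivalent to $t$, the stabilizer $A_\infty=\bigcup_n\Cen_{G_n}(j)$ of $j$ acts transitively on $J\setminus\{j\}$ by conjugation, sharpness follows from~(3), and the characteristic~$0$ condition follows from~(2) being inherited through the limit.

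\textbf{Extension step.} Given $G_n$ satisfying (1)--(6) and an involution $s\in G_n$ with $s\not\sim_j t$, use hypothesis~(6) to select the unique minimal $s'\sim_j s$, and form $G_{n+1}$ as the HNN extension with stable letter $\gt$ that centralizes $j$ and conjugates $s'$ to $t$, amalgamating $\overline{\lan j,s'\ran}^{G_n}$ with $\overline{\lan j,t\ran}^{G_n}$ via the isomorphism fixing $j$ and sending $s'\mapsto t$. Hypothesis~(5) identifies both closures with the centralizers of the corresponding translations $js'$ and $jt$, which is precisely the algebraic content that makes this amalgamation well defined and compatible with the ambient conjugation structure.

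\textbf{Preservation lemma.} The heart of the argument is that $G_{n+1}$ again satisfies (1)--(6). The pivotal ingredient is malnormality of $\overline{\lan j,s'\ran}^{G_n}$ in $G_n$: if a conjugate of this closure meets it in a nontrivial translation $js_1$, then by~(5) the conjugator centralizes $js_1$ and hence lies in a closure, and hypotheses~(4) and~(6) then force the conjugator into the closure itself. Granted malnormality, Britton's lemma controls normal forms in $G_{n+1}$: every new involution is $G_{n+1}$-conjugate to one already in $G_n$ (preserving (1)), every new translation either lies in $G_n$ or has positive syllable length and thus infinite order (preserving (2)), and the uniqueness assertions~(3), (4), (6) together with the centralizer identification~(5) transport through the extension by Bass--Serre analysis of centralizers.

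\textbf{Main obstacle.} The hardest step will be preserving conditions~(5) and~(6) simultaneously: the stable letter $\gt$ fuses two previously distinct closures, and one must verify that no accidental new $\sim_j$-equivalence class arises without a unique minimal representative and that no unexpected element enters the centralizer of a translation. This demands a careful length analysis of cyclically reduced words in the Bass--Serre tree of $G_{n+1}$, identifying the minimal representative of each new class among the conjugates introduced by $\gt$. Once the preservation lemma is in hand, a standard enumeration of the $\sim_j$-classes and iteration (bookkept through transfinite induction if necessary) produces $\calg=\bigcup_n G_n$, which by construction satisfies (1)--(6) with every involution $\sim_j t$, and therefore acts sharply $2$-transitively in characteristic~$0$ on its involutions. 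Absence of an abelian normal subgroup is then a standard consequence: any such subgroup would act regularly on $J$ and exhibit $\calg$ as a split sharply $2$-transitive group over a nearfield, contradicting the non-split HNN amalgamations persistently inherited by $\calg$ from the construction.
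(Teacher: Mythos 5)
Your overall architecture --- iterate HNN extensions $G=G_0\subset G_1\subset\cdots$ so that ${\Cen}(j)$ eventually acts transitively on the involutions distinct from $j$, then pass to the union --- is the same as the paper's, but two of your key steps fail as stated. First, your extension step is not well defined. You amalgamate $\overline{\langle j,s'\rangle}^{G_n}$ with $\overline{\langle j,t\rangle}^{G_n}$ ``via the isomorphism fixing $j$ and sending $s'\mapsto t$,'' but these two groups are not isomorphic: since $s'$ is minimal, $\overline{\langle j,s'\rangle}=\langle j,s'\rangle$ is plain infinite dihedral, whereas hypothesis (3) provides, for every $n>0$, an involution $s_n$ with $(js_n)^n=jt$, so $\overline{\langle j,t\rangle}$ properly contains $\langle j,t\rangle$ and its translation subgroup is divisible, hence not cyclic. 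No isomorphism can fix $j$ and send $s'$ to $t$. The paper instead takes the associated subgroups to be the two genuine dihedral groups $\langle j,t\rangle$ and $\langle j,v\rangle$ with $v$ minimal, and the resulting asymmetry (translations of $\langle j,t\rangle$ are divisible in $G$, those of $\langle j,v\rangle$ are not) is precisely what powers the Britton's-lemma analysis, e.g.\ the fact that no nontrivial translation of $D_t$ is $G$-conjugate into $D_v$.

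Second, you conflate $A$-conjugacy with $\sim_j$-equivalence. A stable letter $\tau\in{\Cen}(j)$ with $(s')^\tau=t$ makes $s'$ and $t$ conjugate under the point stabilizer; it does not produce any relation $(js')^m=(jt)^n$, so it does not ``merge the $\sim_j$-class of $s'$ into that of $t$.'' Worse, merging all classes into that of $t$ is not just unnecessary but fatal to the goal: if every involution $s$ were equivalent to $t$, then by (5) every translation $js$ would lie in ${\Cen}(jt)=\Tr(\overline{\langle j,t\rangle})$, so all translations would commute and $\Tr(\calg)$ would be an abelian normal subgroup --- exactly the split case you must exclude. The correct picture, which the paper proves, is that the $\sim_j$-class of $t$ does not grow at all under the extension; only the $A$-orbit of $t$ grows, and that is all sharp $2$-transitivity requires. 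Relatedly, your closing claim that an abelian normal subgroup is excluded by ``non-split HNN amalgamations persistently inherited'' is not an argument; the paper invokes B.~H.~Neumann's theorem that such a subgroup would have to equal $\Tr(\calg)$ and then exhibits two non-commuting translations.
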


As an immediate consequence of Theorem \ref{thm s2t in char 0} we have

\begin{thm}\label{t:existence}
There exist sharply $2$-transitive groups of characteristic~$0$ not containing a non-trivial abelian normal subgroup.
\end{thm}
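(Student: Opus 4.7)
The plan is to construct a group $G$ satisfying hypotheses~(1)--(6) of Theorem~\ref{thm s2t in char 0} and then invoke that theorem. Start from a small base group $G_0$ containing three involutions $j,t,t'$ with $j^{t'}=t$; a natural choice is a free product of copies of $\zz/2$ together with an added stable letter implementing the required conjugation. Enumerate in a single countable list all demands that the axioms place on $G$: pairs of involutions to be made conjugate (for~(1)), pairs $(s,n)$ for which a unique involution $js$ with $(js)^n=jt$ must be provided (for~(3)), equivalent involutions to be made $A$-conjugate (for~(4)), and equivalence classes in which a unique minimal representative must exist (for~(6)).

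At each stage, build $G_{n+1}$ from $G_n$ by a single HNN extension or amalgamated free product along a carefully chosen malnormal subgroup realizing the next demand in the list. For instance, to make two involutions conjugate one uses an HNN extension whose stable letter implements the conjugation; to supply a new $n$-th root of a translation one glues in an infinite dihedral group extending the existing one along a cyclic malnormal subgroup. The critical property is malnormality of the associated subgroups: combined with Britton's lemma and the normal-form theorem for amalgamated products, it forces that no new torsion appears outside the amalgamated factors, that any two distinct involutions in $G_{n+1}$ still generate an infinite dihedral group (preserving~(2)), and that centralizers of translations remain exactly as prescribed by~(5).

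Let $G=\bigcup_n G_n$. Since the enumeration is exhaustive, axioms~(1), (3), (4) and~(6) hold in the limit, while~(2) and~(5) hold because they are preserved at every finite stage. Applying Theorem~\ref{thm s2t in char 0} to $G$ then produces an overgroup $\calg$ acting sharply $2$-transitively in characteristic~$0$ on its set of involutions and containing no non-trivial abelian normal subgroup, which yields the theorem.

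The main obstacle is the inductive step: at each stage one must verify that the subgroups taken as HNN or amalgamation data are genuinely malnormal in $G_n$, and that the single new relation being imposed does not accidentally violate a uniqueness clause of~(3) or~(6) or disturb the centralizer description~(5). This requires a careful normal-form analysis at every stage, in the spirit of the fixed-point-free construction of~\cite{RST} but substantially more delicate, since in the characteristic-$0$ setting the dihedral subgroups $\lan j,s\ran$ are infinite and the axioms impose sharp structural constraints on roots, minimal representatives and centralizers which must be maintained throughout the induction.
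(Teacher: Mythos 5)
There is a genuine gap here, and it stems from the choice of starting point. Your proposed base group (a free product of copies of $\zz/2$ with a stable letter) does not satisfy hypothesis (3) of Theorem~\ref{thm s2t in char 0}: in such a group the translation $jt$ has no $n$-th root of the form $js$ for any $n>1$, so you must \emph{adjoin} roots. You acknowledge this ("to supply a new $n$-th root of a translation one glues in an infinite dihedral group\dots along a cyclic malnormal subgroup"), but this is precisely the step that none of the machinery in the paper covers: Proposition~\ref{p:main} and the HNN-extensions of Section~\ref{s:construction of G_1} only adjoin elements of $A$ conjugating $t$ to other involutions; they never enlarge $\overline{\lan j,t\ran}$. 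An amalgamation $G\ast_{D_\infty}D'_\infty$ adjoining roots would require its own normal-form analysis to preserve (2), the uniqueness clause of (3), the centralizer description (5), and the unique-minimal-representative condition (6) — and cyclic subgroups of $D_\infty$ are not malnormal in $D_\infty$, so the malnormality you lean on is not available in the form you state. Since you explicitly defer all of these verifications ("the main obstacle\dots requires a careful normal-form analysis at every stage"), the proof is a programme rather than an argument, and the hardest part of the programme is exactly the part left open.

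The paper sidesteps all of this with a two-line observation: the split sharply $2$-transitive group $AGL(1,\qq)\cong\qq^+\rtimes\qq^*$ \emph{already} satisfies hypotheses (1)--(6) — in particular (3) holds because $\qq$ is uniquely divisible, so every translation has a unique $n$-th root for every $n$ — and the free product $G=AGL(1,\qq)\ast\zz$ (Lemma~\ref{l:makenontrans}) preserves the hypotheses while destroying sharp $2$-transitivity so that the induction of Theorem~\ref{thm s2t in char 0} has something to do. All the iterative HNN work you describe is then exactly the content of Theorem~\ref{thm s2t in char 0}, which is already proved; re-deriving it inside the proof of Theorem~\ref{t:existence} is redundant, and attempting it from a base group that fails (3) adds a genuinely new difficulty that your sketch does not resolve. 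To repair your argument with minimal change, replace your base group by a concrete uniquely divisible example such as $AGL(1,\qq)\ast\zz$ and drop the enumeration of demands entirely.
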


Theorem~\ref{thm s2t in char 0} will follow inductively from
the following proposition, which states that any Zassenhaus group
can be extended to a Zassenhaus group in which the point stabilizers
are `a bit more transitive':

\begin{prop}\label{p:main}

Let $G$ be a group containing involutions $j,t$ and $t'$ with $j^{t'}=t$.
Let $A={\Cen}_G(j)$ and assume that conditions $(1)$ -- $(6)$ of
Theorem~\ref{thm s2t in char 0} hold.

Then for any involution $v\in G$ with $v\neq j$ there exist  an extension $G_1$ of $G$ such that for $A_1={\Cen}_{G_1}(j)$  there exists some $f\in A_1$ with $t^f=v$ and conditions $(1) - (6)$ continue to hold with $G_1$ and $A_1$ in place of $G,A$.
\end{prop}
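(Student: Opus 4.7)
The plan is to realize $G_1$ as an HNN extension of $G$, adjoining a stable letter $f$ that centralizes $j$ and conjugates $t$ to $v$. If $v \in t^A$ already, one takes $G_1 = G$. Otherwise, condition (2) tells us that both $\langle j, t\rangle$ and $\langle j, v\rangle$ are infinite dihedral, so the map $j \mapsto j$, $t \mapsto v$ extends to an isomorphism $\phi \colon \langle j, t\rangle \to \langle j, v\rangle$. Form
\[
G_1 := \langle G,\, f \mid f^{-1} h f = \phi(h) \text{ for all } h \in \langle j, t\rangle\rangle,
\]
the HNN extension with associated subgroups $H = \langle j, t\rangle$ and $K = \langle j, v\rangle$. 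By construction $f \in \Cen_{G_1}(j) = A_1$ and $t^f = v$, delivering the element required by the statement.

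The first technical step is to check that $H$ and $K$ are malnormal in $G$, so that $G_1$ has a well-behaved Bass-Serre tree $T$. Malnormality of $H$ should follow from conditions (5) and (6): if $g \in G$ with $H^g \cap H \neq 1$, then either a power of the translation $jt$ or the involution $j$ is conjugated by $g$ back into $H$; condition (5) identifies the centralizer of any translation with the translations of a single $\sim_j$-class, while condition (6) gives a unique minimal representative of that class, and together these constraints force $g \in H$. The same argument handles $K$. With malnormality in hand, every element of $G_1$ acts on $T$ either elliptically (fixing a vertex) or hyperbolically (with infinite order), so all finite-order elements, in particular all involutions, are conjugate into a vertex stabilizer, i.e.\ into a conjugate of $G$.

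Conditions (1) and (2) for $G_1$ then follow readily: all involutions are conjugate by virtue of (1) in $G$; and for two distinct involutions $s_1, s_2 \in G_1$, the product $s_1 s_2$ is either elliptic (so $\langle s_1, s_2\rangle$ is conjugate into $G$ and (2) in $G$ applies) or hyperbolic of infinite order, whence $\langle s_1, s_2\rangle$ is infinite dihedral. For (3)--(6) one uses Britton normal form together with the tree action. For (4), the $\sim_j$-equivalence class of $t$ is enlarged precisely by $v$ and its $A_1$-translates (since $f \in A_1$ centralizes $j$ and sends $jt$ to $jv$), and malnormality of $H$ prevents any further involution from entering the class. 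For (5), the centralizer of a translation either lies in a vertex stabilizer (reducing to (5) in $G$) or stabilizes an axis, and the latter is controlled via the malnormality of $H$. Condition (6) is handled by transporting any involution back into $G$ via the tree action and invoking uniqueness of minimal representatives there; the minimal representative in $G_1$ of a new class should be the image of a minimal representative in $G$ under a suitable conjugation by $f^{\pm 1}$.

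The main obstacle will be this last verification of (3), (5) and (6): one must rule out the possibility that long Britton words in $f$ surreptitiously produce an unexpected $n$-th root of $jt$, an extra element centralizing a translation, or a competing minimal representative in some equivalence class. This requires a careful case analysis on Britton normal forms, with repeated appeals to malnormality and to conditions (5)--(6) for $G$ to excise each possible accident; in practice one may also need to build $f$ through an intermediate free product construction before taking the HNN extension, so that the associated subgroups in the ambient group are visibly malnormal and no unintended identifications can survive.
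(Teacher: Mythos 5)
Your overall architecture --- the HNN extension $G_1=\langle G, f\mid j^f=j,\ t^f=v\rangle$ with associated subgroups $\langle j,t\rangle$ and $\langle j,v\rangle$, followed by a Britton/normal-form analysis of (3), (5), (6) --- is exactly the paper's construction. But the load-bearing step you propose, malnormality of $H=\langle j,t\rangle$ and $K=\langle j,v\rangle$ in $G$, is false, and conditions (5)--(6) cannot rescue it. Condition (3) forces $jt$ to have an $n$-th root $js_n$ for \emph{every} $n>0$; since $jt$ has no proper roots inside the infinite dihedral group $\langle j,t\rangle$, each such $s_n$ lies outside $H$, yet $s_n$ inverts $jt$, so $H^{s_n}\cap H\supseteq\langle jt\rangle\neq 1$. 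More cheaply, any $a\in A\setminus\langle j\rangle$ satisfies $j\in H^a\cap H$ while $a\notin H$ (Lemma~\ref{l:invol}(iii)). So the Bass--Serre tree of this HNN extension does not have malnormal edge stabilizers, and the tree-based control you want over centralizers of translations and over roots is not available in the form you describe. The same divisibility phenomenon is also why your claim that ``the $\sim_j$-class of $t$ is enlarged precisely by $v$ and its $A_1$-translates'' needs an argument: a priori a long Britton word could centralize $jt$, and ruling this out is precisely Lemma~\ref{l:centralizer of translations}, which rests not on malnormality but on the weaker non-conjugacy statement of Lemma~\ref{l:nonconjugate} (no nontrivial translation of $D_t$ is $G$-conjugate into $D_v$).

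Two further ingredients of the paper are missing from your plan and are needed to make that non-conjugacy statement true and the normal-form analysis work. First, one must reduce to the case where $v$ is a \emph{minimal} involution relative to $j$ (Lemma~\ref{l:reduction to minimal}); otherwise ${\Cen}_G(jv)=\Tr(\overline{\langle j,v\rangle})$ properly contains $\langle jv\rangle$ and identifying $\langle j,v\rangle$ with $\langle j,t\rangle$ would wreck uniqueness of roots in (3) and the description of centralizers in (5). Second, for property (5) the paper runs a Euclidean-algorithm induction on lengths of Britton normal forms (Lemma~\ref{l:euclid final}) to show the centralizer of a cyclically reduced translation $rs$ is the cyclic group generated by $rt_1$ for a shortest involution $t_1$ in the class; this is the concrete mechanism that excludes your ``surreptitious'' extra centralizing elements, and it uses Lemma~\ref{l:nonconjugate} and Lemma~\ref{l:dihedral properties} rather than malnormality of the edge groups. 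As written, your proposal names the right dangers but relies on a false premise to dispose of them, so the verification of (3), (5) and (6) is a genuine gap.
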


The main body of the paper is concerned with the proof of Proposition~\ref{p:main}. Theorems~\ref{thm s2t in char 0} and \ref{t:existence} then follow rather quickly from this. Their proofs
will be given in Section~\ref{s:maintheorems}.

It might be worth pointing out that for a group $G$ to be contained in a sharply $2$-transitive group of characteristic $0$, conditions (2) and (5) of Theorem~\ref{thm s2t in char 0} are necessary, while assumption (3) is
necessary in the following weaker form:

\medskip

(3') {\it for any $n>0$ there is at most one involution $s\in G$ with $(js)^n=jt$.}
\medskip

Assumptions (1), (4) and (6) will necessarily hold in any sharply $2$-transitive group of characteristic $0$.  They are added here as prerequisites so as to facilitate the proof of Proposition~\ref{p:main}, which forms the induction step for the proof of Theorem~\ref{thm s2t in char 0}.

\medskip
 We would like to point out that in the case where involutions have fixed points, the approach using partial actions as in \cite{TZ, Te1} leads to exactly the same situation
to deal with as in the approach used here: since any sharply $2$-transitive action of $G$
on a set $X$ in characteristic different from $2$ is necessarily (equivalent to) the conjugation action on the set of involutions of $G$ and the involutions generate subgroups, one does not gain any freedom in the construction from using partial actions.

Finally we mention that while the sharply $2$-transitive groups of characterstic $2$ constructed in \cite{TZ} where shown in \cite{Te1} to be point stabilizers of sharply $3$-transitive groups (of the same characteristic) we do not know whether such an extension is possible for the groups constructed here.

\section{Some preliminaries on involutions}\label{sect explanation}

In this section we  collect some facts about involutions and
the equivalence relation relative to the fixed involution $j\in A$ defined above.
Throughout this section we assume that $G,j, A={\Cen}_G(j), t$ and $t'$ are as in 
the statement of Theorem~\ref{thm s2t in char 0} and satisfy the assumptions (1) -- (6) given there.

Recall that a proper subgroup $B$ of a group
$H$ is {\it malnormal} in $H$ if
 $B\cap g^{-1}Bg=1,$ for all $g\in H\setminus B$.

\begin{lemma}\label{l:malnormal}\label{l:invol}
(i) The centralizer $A={\Cen}(j)$ is malnormal in $G$.

(ii) If $g^n\in A$, then $g\in A$ or $g^n=1$.

(iii)  The intersection of $A$ with any dihedral subgroup
of $G$ is contained in $\lan j\ran$. In particular, $j$ is the unique involution
contained in $A$.

(iv) If $r, s\in G\sminus A$ are involutions with $s=arb$ for some $a,b\in A$, then $a=b^{-1}$.
\end{lemma}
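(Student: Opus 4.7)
The plan is to prove the four parts in order, with part (i) being the crux: once malnormality of $A$ is established, parts (ii) and (iv) follow by exhibiting a nontrivial element in $A \cap g^{-1}Ag$, and part (iii) follows by combining these with the fact (from (2)) that distinct involutions never commute in an infinite dihedral group.

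For (i), suppose $g \notin A$ and $a \in A \cap g^{-1}Ag$; I want to show $a = 1$. The element $a$ centralizes both $j$ and $j^g$, and since $g \notin A$ we have $j^g \neq j$, so in particular $a$ centralizes the translation $j \cdot j^g$. Applying hypothesis (5) with $s = j^g$, any centralizing element is either $1$ or of the form $js'$ for some involution $s'$ equivalent to $j^g$ relative to $j$. In the latter case, $a = js' \in A$ forces $j$ and $s'$ to commute; but by (2), $\langle j, s'\rangle$ is either trivial (if $s' = j$) or infinite dihedral, and in the latter case distinct involutions do not commute. Hence $s' = j$ and $a = 1$.

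For (ii), observe that $g^n \in A$ automatically gives $g^n = g \cdot g^n \cdot g^{-1} \in gAg^{-1}$, so $g^n \in A \cap gAg^{-1}$; malnormality from (i) then immediately yields $g \in A$ or $g^n = 1$. For (iii), first note that any involution $x \in A$ distinct from $j$ would give two commuting distinct involutions in the dihedral group $\langle j, x\rangle$, contradicting (2); so $j$ is the only involution in $A$. For the translation part, if $(ab)^n \in A$ with $a \neq b$ involutions generating $D$, apply (ii) with $g = ab$: either $(ab)^n = 1 \in \langle j\rangle$, or $ab \in A$, in which case $ba = a(ab)a = a^{-1}(ab)a \in aAa^{-1}$, so $ab \in A \cap aAa^{-1}$, forcing $a \in A$ by (i); then $a$ is an involution in $A$, so $a = j$, and similarly $b = j$, contradicting $a \neq b$.

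For (iv), since $s = arb = s^{-1} = b^{-1}ra^{-1}$, a short rearrangement gives $s(ab)s = (ab)^{-1}$, i.e., $s$ inverts $c := ab \in A$. Then $c^{-1} = scs^{-1} \in sAs^{-1}$ while $c^{-1} \in A$, so $c^{-1} \in A \cap sAs^{-1}$; since $s \notin A$, malnormality forces $c = 1$, i.e., $a = b^{-1}$.

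The main obstacle is (i), as it is the only part that requires the strong hypothesis (5) on centralizers of translations; the other three parts are essentially formal consequences of malnormality together with the dihedral structure from (2). The one subtle point is remembering, in (i), that an equation like $a = js'$ in $A$ really does force $j$ and $s'$ to commute, which is where (2) enters to produce the final contradiction.
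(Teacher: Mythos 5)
Your proof is correct, and for parts (i) and (ii) it is exactly the paper's argument: a nontrivial element of $A\cap A^g$ centralizes the translation $jj^g$, hence by hypothesis (5) has the form $js'$ with $s'\sim_j j^g$, and commuting with $j$ then contradicts (2) unless it is trivial; (ii) is the standard formal consequence. In (iii) and (iv) you take slightly different (but equally valid) routes. For the uniqueness of the involution in $A$ you use only (2) — a second involution in $A={\Cen}(j)$ would commute with $j$ — whereas the paper invokes conjugacy of involutions together with malnormality; your version is marginally more economical since it does not need hypothesis (1). For translations in (iii) the paper observes directly that an involution $s\notin A$ inverts $rs\in A$ and contradicts malnormality, while you route through (ii) and then conjugate by $a$; both work, and your appeal to (2) to rule out $(ab)^n=1$ for $n\neq 0$ is needed and correctly made. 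For (iv) the paper writes $ab=(ara^{-1})s$ as a product of two involutions and applies (iii), whereas you verify that $s$ inverts $ab$ and apply malnormality directly — again equivalent. One cosmetic slip: in (iii) you write ``$ab\in A\cap aAa^{-1}$'' where you have actually shown $ba\in A\cap a^{-1}Aa$; since $a=a^{-1}$ and $ba=(ab)^{-1}$, the conclusion $a\in A$ is unaffected.
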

\begin{proof}
(i)  Suppose $g\in G\setminus A$ is such that $A\cap A^g\neq 1$.
Since $A^g={\Cen}_{G}(j^g)$, any $h\in A\cap A^g$
centralizes both $j$ and $j^g\neq j$. Hence $h$ centralizes
the translation $jj^g$ and therefore is contained in $\Tr\overline{\lan j,j^g\ran}$. By Property (5) no nontrivial translation in  $\Tr\overline{\lan j,j^g\ran}$ centralizes $j$, a contradiction.

(ii) If $g^n\in A$, then $g^n\in A\cap A^g$, and thus by malnormality of $A$
we have $g^n=1$ or $g\in A$.

(iii) Since $A={\Cen}_G(j)$ is malnormal and all involutions are conjugate in $G$, it follows that $j$ is the unique involution in $A$.
Now if $s, r$ are distinct involutions in $G$
such that $sr\in A$, we may assume  that $s\notin A$.
Since $s$ inverts $rs,$ we contradict the malnormality of $A$
in $G$.  

(iv) We have $s=ara^{-1}ab$.  Hence $ab=ara^{-1}s$ is a product of two involutions.
By (iii), $ab=1$.
%
\end{proof}

\begin{lemma}\label{l:translation_product}
If  $1\neq  s_1s_2\in\lan j,s_3\ran$ for involutions $s_1,s_2,s_3\neq j$,
then $s_1 $ and $s_2$ are equivalent to $s_3$ relative to~$j$.
\end{lemma}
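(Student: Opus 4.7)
The plan is to exploit hypothesis (5), which identifies $\Cen(js)$ with $\Tr(\overline{\lan j,s\ran}^G)$ for any involution $s$. First I would argue that $s_1s_2$, being a nontrivial element of $\lan j,s_3\ran$ (infinite dihedral by (2)) and also a product of two distinct involutions (hence of infinite order, again by (2)), must equal $\tau^n$ for some $n\neq 0$, where $\tau:=js_3$.

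The next step is to observe that $s_1$ inverts $\tau^n$ by the direct computation
$s_1\tau^n s_1=s_1(s_1s_2)s_1=s_2s_1=(s_1s_2)^{-1}=\tau^{-n}$; symmetrically $s_2$ inverts $\tau^n$, and of course $j$ inverts $\tau$ and hence $\tau^n$. Consequently both $s_1j$ and $s_2j$ commute with $\tau^n$, i.e.\ lie in $\Cen(\tau^n)$.

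The crux is to apply (5) to identify $\Cen(\tau^n)$. I would write $\tau^n=ju$ with $u:=j\tau^n$, verify that $u$ is an involution (using $j\tau j=\tau^{-1}$), that $u\in\lan j,s_3\ran\setminus\{j\}$, and that $ju=\tau^n=(js_3)^n$, so $u\sim_j s_3$. Since $\overline{\lan j,s\ran}^G$ depends only on the $\sim_j$-equivalence class of $s$, hypothesis (5) then yields
\[
\Cen(\tau^n)=\Cen(ju)=\Tr(\overline{\lan j,u\ran}^G)=\Tr(\overline{\lan j,s_3\ran}^G)=\{1\}\cup\{js':s'\sim_j s_3\}.
\]

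Since $s_1\neq j$ we have $s_1j\neq 1$, so $s_1j\in\Cen(\tau^n)$ gives $s_1j=js'$ for some $s'\sim_j s_3$. Choosing $p,q$ with $(js')^p=(js_3)^q$, I compute $(js_1)^p=(s_1j)^{-p}=(js')^{-p}=(js_3)^{-q}$, yielding $s_1\sim_j s_3$; the symmetric argument applied to $s_2j$ gives $s_2\sim_j s_3$. The only real obstacle is the bookkeeping around the definition of $\sim_j$ together with the fact that left or right multiplication by $j$ inverts translations; once (5) is correctly used to compute $\Cen(\tau^n)$, everything follows by direct manipulation.
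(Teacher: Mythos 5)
Your proof is correct, and it rests on the same essential input as the paper's own argument: hypothesis (5), which identifies the centralizer of a translation $ju$ with $\Tr(\overline{\lan j,u\ran})$. The execution, however, is structured differently and is rather more direct. The paper first normalizes to $s_1s_2=js_3$, then runs a chain of conjugation identities to show that $(js_1)^2$ centralizes $(js_2)^2$, applies (5) to conclude $s_1\sim_j s_2$, and finally repeats the argument with the rearranged relation $s_3s_2=js_1$ to obtain $s_3\sim_j s_2$, finishing by transitivity of $\sim_j$. You instead keep $s_1s_2=(js_3)^n$ and observe that $s_1$, $s_2$ and $j$ each invert this translation, so that $s_1j$ and $s_2j$ lie in its centralizer; a single application of (5) (to the involution $u=j(js_3)^n$, which is equivalent to $s_3$) then yields both $s_1\sim_j s_3$ and $s_2\sim_j s_3$ at once, with no symmetrization and no transitivity step. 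What your route buys is brevity and transparency: the inversion relations are immediate from the dihedral structure, whereas the paper's identity $(js_1)^2\in{\Cen}((js_2)^2)$ takes several lines of manipulation to extract. The one piece of bookkeeping worth making explicit is that the exponents $p,q$ witnessing $s'\sim_j s_3$ can be taken nonzero (as the definition of $\sim_j$ tacitly requires), so that $(js_1)^p=(js_3)^{-q}$ genuinely witnesses equivalence; this is automatic here since $js'$ and $js_3$ have infinite order by (2).
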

\begin{proof}
We may suppose $s_1s_2=js_3$. 
Conjugation by $j$ inverts both sides and hence yields $s_1^js_2^j=s_2s_1=s_3j$.
Hence $s_2s_1^js_2^j=s_1=s_2^js_1^js_2$. 

Conjugating the left and the right part of the previous equation by $s_2$
we see that

\[ s_1^js_2^js_2=s_2s_2^js_1^j.\]

Multiplying both sides of the equation from the right by $s_1^j$ we have

\[ s_1^js_2^js_2s_1^j=s_2s_2^j.\]

Thus $s_1^j$ inverts $s_2s_2^j$. Conjugating by $j$ we
see that also $s_1$ inverts $s_2s_2^j$. Hence $s_1^js_1=js_1js_1=(js_1)^2$ centralizes $s_2^js_2=(js_2)^2$.
By assumption, $s_1$ and $s_2$ are equivalent relative to 
$j$. 
Since we also have $s_3s_2=js_1$, we see similarly  that
$s_2$ and $s_3$ are equivalent relative to $j$, proving the claim.
\end{proof}

\begin{lemma}\label{l:translation are conjugate}

 For any distinct involutions $s_1,s_2\in G$ with $s_1s_2\in\lan j,t\ran$ there is some some $g\in G$ with $s_1^g=j, s_2^g=t$.
\end{lemma}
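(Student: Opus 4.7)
The plan is to split on whether $s_1$ or $s_2$ equals $j$, and reduce everything to the case where Lemma \ref{l:translation_product} applies and we can use (4) to convert conjugacy-by-$A$ into the equivalence relation.

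First I will dispatch the boundary cases. If $s_1 = j$, then $s_2 = j \cdot (s_1 s_2) \in \langle j,t\rangle$, so $s_2$ is an involution distinct from $j$ in the infinite dihedral group $\langle j,t\rangle$ (infinite by (2)). In such a group the product $js_2$ of two distinct involutions is a nonidentity, non-involution element, hence a nontrivial power of $jt$; this gives $s_2 \sim_j t$ directly from the definition, and (4) then furnishes $a \in A$ with $s_2 = t^a$, so $g = a^{-1}$ works. The case $s_2 = j$ is symmetric: write $s_1 = t^b$ with $b\in A$ and verify that $g := b^{-1} t'$ satisfies $s_1^g = t^{t'} = j$ (since $t^{t'}=j$) and $s_2^g = j^{b^{-1}t'} = j^{t'} = t$ (using $b^{-1}\in A$).

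For the main case $s_1 \ne j \ne s_2$, Lemma \ref{l:translation_product} with $s_3 = t$ gives $s_1 \sim_j t$ and $s_2 \sim_j t$. By (4), pick $a_1 \in A$ with $s_1 = t^{a_1}$ and set $g_1 := a_1^{-1} t'$. A direct calculation (using $a_1\in A$ centralizes $j$ and $t'^{-1}=t'$) yields $s_1^{g_1} = t^{t'} = j$ and $j^{g_1} = t$. Writing $s_2' := s_2^{g_1}$, any $g$ of the form $g_1 b^{-1}$ with $b\in A$ still sends $s_1$ to $j$, so it suffices to produce $b\in A$ with $(s_2')^{b^{-1}} = t$, which by (4) reduces to proving $s_2' \sim_j t$.

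The core step is therefore showing $s_2' \sim_j t$. By (5) this is equivalent to $j s_2' \in \Cen(jt)$. Now $js_2' = s_1^{g_1} s_2^{g_1} = (s_1 s_2)^{g_1}$, and by the infinite-dihedral discussion above, $s_1 s_2 = (jt)^k$ for some $k \ne 0$. So the task becomes showing $((jt)^k)^{g_1} \in \Cen(jt)$, and since $\Cen(jt)$ is a subgroup, it is enough to prove that $g_1$ normalizes $\Cen(jt)$. For this, note that since $s_1 \sim_j t$ we have $\overline{\langle j,s_1\rangle} = \overline{\langle j,t\rangle}$, and (5) gives $\Cen(js_1) = \Tr(\overline{\langle j,s_1\rangle}) = \Tr(\overline{\langle j,t\rangle}) = \Cen(jt)$. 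On the other hand $(js_1)^{g_1} = t\cdot j = (jt)^{-1}$, so
\[\Cen(jt)^{g_1} = \Cen(js_1)^{g_1} = \Cen\bigl((js_1)^{g_1}\bigr) = \Cen((jt)^{-1}) = \Cen(jt),\]
which is the desired normalization. Applying (4) to $s_2'$ then produces the required $b$ and finishes the proof.

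The one nontrivial step is the normalization $\Cen(jt)^{g_1} = \Cen(jt)$; everything else is bookkeeping with (2), (4), (5) and Lemma \ref{l:translation_product}. This normalization is the mechanism by which the hypothesis $s_1 s_2 \in \langle j,t\rangle$ is converted into $s_2' \sim_j t$, allowing a second application of (4) to land $s_2$ on $t$.
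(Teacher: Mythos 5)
Your proof is correct, and at the level of architecture it is the paper's proof: two successive conjugations, each licensed by hypothesis (4), one landing an involution on $j$ and the other on $t$ (you move $s_1$ to $j$ first via $g_1=a_1^{-1}t'$; the paper moves $s_2$ to $t$ first via $a\in A$ and then applies (4) to ${\Cen}(t)=A^{t'}$). Where you genuinely diverge is in the justification of the crux, namely why the \emph{second} involution still lies in the right equivalence class after the first conjugation. The paper handles this by asserting that $s_1^a$ lands in $\langle j,t\rangle\setminus\{t\}$ and is therefore equivalent to $j$ relative to $t$; you instead prove $s_2^{g_1}\sim_j t$ by showing that $g_1$ normalizes ${\Cen}(jt)$ (via ${\Cen}(jt)={\Cen}(js_1)$ and $(js_1)^{g_1}=(jt)^{-1}$) and then pushing $(s_1s_2)^{g_1}$ through property (5). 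This is a clean and fully explicit substitute for the step the paper states tersely, and it is the main thing your write-up buys over the original. Two small points to tighten: first, in the main case you need $s_1s_2=(jt)^k$ with $k\neq 0$, which requires observing that $s_1s_2$ cannot be an involution of $\langle j,t\rangle$ (otherwise $s_1$ and $s_2$ would commute, contradicting (2)) — your ``infinite-dihedral discussion above'' only covers the situation where both involutions lie in $\langle j,t\rangle$; second, when concluding $s_2'\sim_j t$ from $js_2'\in{\Cen}(jt)=\Tr(\overline{\langle j,t\rangle})$ you should rule out $js_2'=1$, which is immediate from $s_1\neq s_2$.
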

\begin{proof} We may assume that $s_2\neq j$. Then $s_2$ is equivalent to $t$ by Lemma~\ref{l:translation_product}. By assumption (4) there is some $a\in A$ with $s_2^a=t$.
Note that $A^{t'}$ is the centralizer of $t$. Since the involutions in
$\lan j,t\ran\setminus \{t\}$ are equivalent to $j$ relative to $t$, they are conjugate under $A^{t'}$ by property (4) applied to $A^{t'}={\Cen}(t)$. Hence there is some $g\in A^{t'}$ with $(s_1^a)^g=j$ and $ag\in G$ is as
required.
\end{proof}

We will use the following easy facts:

\begin{lemma}\label{l:dihedral properties}
Suppose $v\in G$ is a minimal involution and $s$ is equivalent to $v$ relative to $j$. If $(js)^g\in \lan j, v\ran$ for some $g\in G$, then $g\in \lan j,v\ran$.
\end{lemma}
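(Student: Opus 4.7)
Set $\tau = jv$ and $D = \lan j,v\ran$, which is infinite dihedral by assumption (2). The plan is to show that under $(js)^g \in D$, either $g$ or $jg$ must centralize $js$, after which assumption~(5) will confine the centralizer to $\lan\tau\ran \subseteq D$.

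First I would unpack the hypotheses. Since $v$ is minimal and $s \sim_j v$, one has $\overline{\lan j,s\ran}^G = \overline{\lan j,v\ran}^G = \lan j,v\ran = D$, so in particular $s \in D$ (we may assume $s \neq j$, otherwise the statement is vacuous in the form intended). Thus $js$ is a translation inside the dihedral group $D$, and the translations of $D$ are exactly the powers of $\tau$, so $js = \tau^n$ for some nonzero integer $n$; by~(2), $\tau^n$ has infinite order.

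Next, by assumption, $(js)^g = g^{-1}\tau^n g$ lies in $D$. Since conjugation preserves order, this element has infinite order, and so must lie in the unique maximal infinite cyclic subgroup $\lan\tau\ran$ of $D$. Thus $g^{-1}\tau^n g = \tau^k$ for some $k$, and equality of orders (infinite, equal) forces $k = \pm n$. If $k = n$, then $g$ centralizes $js = \tau^n$; if $k = -n$, then $g$ inverts $\tau^n$, and since $j$ also inverts $\tau^n$ (as $j\tau j^{-1} = \tau^{-1}$), the product $jg$ centralizes $\tau^n$. Either way, $g$ or $jg$ lies in ${\Cen}_G(js)$.

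To finish, I would apply assumption (5) together with the first step:
\[
{\Cen}_G(js) \;=\; \Tr\bigl(\overline{\lan j,s\ran}^G\bigr) \;=\; \Tr(D) \;=\; \lan\tau\ran \;\subseteq\; D.
\]
So in the first case $g \in \lan\tau\ran \subseteq D$, and in the second $jg \in \lan\tau\ran$, whence $g \in j\lan\tau\ran \subseteq D$. The main conceptual move is the identification of ${\Cen}_G(js)$ with $\lan\tau\ran$; once minimality of $v$ collapses $\overline{\lan j,s\ran}^G$ to $D$, this is immediate from~(5), and the rest of the argument is a routine dihedral-group computation.
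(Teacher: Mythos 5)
Your setup is fine: minimality of $v$ does give $\overline{\lan j,s\ran}^G=\lan j,v\ran$, hence $s\in D=\lan j,v\ran$ and $js=\tau^n$ with $\tau=jv$, and the hypothesis does place $(js)^g$ in $\lan\tau\ran$ because it has infinite order. The gap is the step ``$g^{-1}\tau^n g=\tau^k$ and equality of orders forces $k=\pm n$.'' Both $\tau^n$ and $\tau^k$ have infinite order for every nonzero $n,k$, so comparing orders gives no information; and in a general group an element of infinite order can perfectly well be conjugate to a proper power of itself (think of $t^{-1}at=a^2$ in a Baumslag--Solitar group). Conjugation by $g$ is not known at this point to restrict to an automorphism of $\lan\tau\ran$, which is what your argument tacitly assumes. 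Since ``$g$ centralizes or inverts $\tau^n$'' is equivalent to $g\in\lan\tau\ran\cup j\lan\tau\ran=D$, i.e.\ to the conclusion of the lemma, this unproved claim is the entire content of the statement, not a routine dihedral computation.

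The missing ingredient is a second application of assumption (5), which is how the paper proceeds. By (5) and minimality, ${\Cen}(js)=\Tr(\overline{\lan j,s\ran})=\lan\tau\ran$, and likewise ${\Cen}((js)^g)=\lan\tau\ran$, because $(js)^g=\tau^k$ is again a nontrivial translation $js''$ with $s''=j\tau^k$ an involution of $D$ equivalent to $v$. Hence $\lan\tau\ran^g={\Cen}(js)^g={\Cen}((js)^g)=\lan\tau\ran$, so $g$ normalizes the infinite cyclic group $\lan\tau\ran$ and therefore centralizes or inverts $\tau$, and in particular $\tau^n$. Only now does your dichotomy $k=\pm n$ become available; the remainder of your argument (using that $j$ inverts $\tau^n$ and that ${\Cen}(js)=\lan\tau\ran\subseteq D$) then finishes the proof exactly as you wrote it.
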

\begin{proof}
Since $jv$ centralizes $js$, we see that $(jv)^g$ centralizes $(js)^g\in \langle jv\rangle$. Hence $(jv)^g\in \langle jv\rangle$ showing that  $g$ normalizes $\langle jv\rangle$ and hence either centralizes or inverts $jv$. 
Now $j\in \langle j,v\rangle$ inverts $jv$ and ${\Cen}(jv)=\langle jv\rangle$, proving the claim.
\end{proof}

\begin{lemma}\label{l:linear}
For any involution $s\in G$ and $n>0$, there is at most one involution $s_1$ such that $(js_1)^n=js$. In particular,
$(js_1)^n=(js_2)^n$ implies $s_1=s_2$.
\end{lemma}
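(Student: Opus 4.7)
The approach is to analyze the centralizer $\Cen(js)$ using property~(5). First dispose of the trivial case $s = j$: then $(js_1)^n = 1$, but by~(2) the translation $js_1$ has infinite order whenever $s_1 \neq j$, so the only solution is $s_1 = j$. Assume henceforth $s \neq j$, and suppose $(js_1)^n = js = (js_2)^n$; set $c \df js$. Since any element commutes with its own powers, both $js_1$ and $js_2$ lie in $\Cen(c)$.

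By property~(5), $\Cen(c) = \Tr(\overline{\lan j,s\ran}^G) = \{1\} \cup \{jr : r \sim_j s\}$; combined with~(2), every nonidentity element of $\Cen(c)$ is a translation of infinite order. The next step is to verify that $\Cen(c)$ is \emph{abelian}. Given two nontrivial elements $jr_1, jr_2 \in \Cen(c)$, transitivity of $\sim_j$ gives $r_1 \sim_j r_2$, hence $\overline{\lan j,r_1\ran}^G = \overline{\lan j,s\ran}^G$; a second application of~(5), now to $r_1$, yields $\Cen(jr_1) = \Tr(\overline{\lan j,r_1\ran}^G) \ni jr_2$, so $jr_1$ and $jr_2$ commute.

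In the abelian group $\Cen(c)$, the element $(js_1)(js_2)^{-1}$ then has $n$-th power $c \cdot c^{-1} = 1$, so is torsion; but the only torsion element of $\Cen(c)$ is $1$, giving $js_1 = js_2$ and hence $s_1 = s_2$.

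The essential content is the abelianness of $\Cen(js)$, which would be the main obstacle were property~(5) slightly weaker; as stated, it drops out of two applications of~(5) together with the transitivity of~$\sim_j$.
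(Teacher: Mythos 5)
Your proof is correct, but it takes a genuinely different route from the paper's. The paper argues by cases on whether $s\in AtA$: if $s\notin AtA$, property (6) supplies the unique minimal involution $s'$ equivalent to $s$, so that $\overline{\langle j,s\rangle}^G=\langle j,s'\rangle$ and all candidate roots of $js$ lie in the single infinite dihedral group $\langle j,s'\rangle$, where uniqueness of $n$-th roots of a translation is clear; if $s\in AtA$, then $s$ is an $A$-conjugate of $t$ by Lemma~\ref{l:invol}(iv) and the uniqueness clause of hypothesis (3) is invoked directly (and transported by conjugation under $A$). You avoid the case split entirely: property (5) identifies $\Cen(js)$ with $\Tr(\overline{\langle j,s\rangle}^G)$, a second application of (5) together with transitivity of $\sim_j$ shows that any two nontrivial elements of this set centralize one another, and property (2) shows every nontrivial element $jr$ (with $r\neq j$ an involution) has infinite order; uniqueness of $n$-th roots in a torsion-free abelian group finishes the argument. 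What your version buys is uniformity and economy of hypotheses --- it uses only (2) and (5), and in particular shows that the "at most one" half of hypothesis (3), i.e.\ the paper's condition (3'), is already forced by (2) and (5), which is consistent with the paper's remark that (2), (5) and (3') are the necessary conditions. What the paper's version buys is brevity, since in the case $s\notin AtA$ the relevant group is explicitly the infinite cyclic group $\langle js'\rangle$ rather than an abstract torsion-free abelian centralizer.
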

\begin{proof}
If $s\notin AtA$, this holds in $\lan j,s'\ran$ for $s'$ the minimal involution equivalent to~$s$. If $s\in AtA$, then $s$ is conjugate to $t$ by Lemma~\ref{l:invol} and the claim follows from  Property (3).
\end{proof}




\section{Background on HNN-extensions}\label{s:HNN}


The construction of $G_1$ in Proposition~\ref{p:main} will be given in Section~\ref{s:construction of G_1} as an HNN-extension of $G$. In preparation
for the proof we here collect some general facts about HNN-extensions.
We start with Britton's lemma, which we state for
an arbitrary group $G$ with isomorphic subgroups $D_t,D_v\leq G$ and a fixed isomorphism $f: D_t\longrightarrow D_v$.

Suppose $D_t,D_v\leq G$ and $G_1=\lan G, f\mid D_t^f=D_v\ran$ is an HNN extension of $G$. Then any element of $G_1$ has the form 
\[
g=g_0f^{\gd_1}g_1\cdots g_{m-1}f^{\gd_m}g_m,
\]
where $g_i\in G,\ i=0,\dots m,\ \gd_i=\pm 1,\ i=1,\dots m$.
We say that the expression for $g$ is \emph{reduced} if the equality
$\gd_i=-\gd_{i-1}=1$ implies  $g_i\notin D_t,$ and $\gd_i=-\gd_{i-1}=-1$ implies
$g_i\notin D_v$. Thus, an expression for $g$ is reduced if there are {\it no $f$-cancellations in $g$}, and in that case we call $m$ the \emph{length} of $g$.

Britton's lemma can be phrased in the following way:

\begin{thm}\label{rem hnn}\rm{(Britton's lemma)}\label{r:Britton alternative}
If $g\in G_1=\lan G, f\mid D_t^f=D_v\ran$ has a reduced expression
\[g=g_0f^{\gd_1}g_1\cdots g_{m-1}f^{\gd_m}g_m \]
then \emph{every} reduced expression for $g$ is of the form
\[g=g_0w_1f^{\gd_1}z_1^{-1}g_1w_2\cdots g_{m-1}w_mf^{\gd_m}z_{m}^{-1}g_m \]
where \[w_i\in D_t \mbox{ if } \gd_i=1 \mbox{ and }z_i=w_i^f\in D_v\]
and
\[w_i\in D_v \mbox{ if } \gd_i=-1 \mbox{ and }z_i=w_i^{f^{-1}}\in D_t.\]
In particular, the length of an element in $G_1$ is well-defined.
\end{thm}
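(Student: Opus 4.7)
The plan is to deduce the statement from the standard normal form theorem for HNN extensions. Fix once and for all right transversals $T_v$ of $D_v$ in $G$ and $T_t$ of $D_t$ in $G$, both containing $1$. Call a word
\[g_0 f^{\gd_1} g_1 \cdots f^{\gd_m} g_m\]
a \emph{normal form} when, for each $1\le i\le m$, one has $g_i\in T_v$ if $\gd_i=1$ and $g_i\in T_t$ if $\gd_i=-1$, and when no subword of the shape $f^{\gd}\,1\,f^{-\gd}$ occurs. The core fact I would establish is that every element of $G_1$ admits exactly one normal form.

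To prove uniqueness I would follow the classical route of defining a left action of the generators of $G_1$ on the set $\caln$ of all normal forms and verifying that the defining relations are respected. An element $h\in G$ acts by left-multiplying $g_0$ by $h$; the stable letter $f$ acts on a normal form with initial $G$-syllable $g_0$ by decomposing $g_0=d\,r$ with $d\in D_v$ and $r\in T_v$, shifting $d$ across $f$ via the HNN relation, and prepending the resulting $f$, with a special rule collapsing the forbidden $f\,1\,f^{-1}$ pattern whenever $r=1$ and $\gd_1=-1$. The action of $f^{-1}$ is defined symmetrically with $T_t$ in place of $T_v$. Checking that this action respects the defining relations of $G_1$ is the central technical step, but is routine bookkeeping; once it is in place, faithfulness follows from the observation that $g\cdot 1$ produces a normal form representing $g$, which gives uniqueness.

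Given uniqueness, the theorem becomes a rewriting statement. To convert the reduced expression $g_0 f^{\gd_1}g_1\cdots f^{\gd_m}g_m$ to normal form, I would proceed from right to left: at step $i$, decompose $g_i=d_i r_i$ with $r_i$ in the appropriate transversal and $d_i$ in the appropriate subgroup, then shift $d_i$ leftward through $f^{\gd_i}$, absorbing the conjugate $f^{\gd_i} d_i f^{-\gd_i}$ into the updated $g_{i-1}$. The hypothesis that the original expression is reduced is precisely what prevents the forbidden $f^{\gd}\,1\,f^{-\gd}$ pattern from arising during this process, so the end result is a genuine normal form of length $m$ with the same sign sequence. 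Applied to any second reduced expression for $g$, the same procedure produces a normal form, which by uniqueness must coincide with the first. Comparing the two reduction sequences syllable by syllable recovers the $w_i$'s of the statement: each $w_i$ lies in the required subgroup, and $z_i=w_i^{f^{\gd_i}}$ is exactly its image across $f^{\gd_i}$ produced by the absorption. In particular the length $m$ is an invariant of $g$.
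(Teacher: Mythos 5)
The paper does not actually prove this statement: it is recorded as the classical Britton's lemma/normal form theorem for the HNN extension $G_1=\lan G,f\mid D_t^f=D_v\ran$ and used as a black box, so there is no internal proof to compare against. Your proposal is the standard textbook argument (the normal form theorem proved by the ``van der Waerden trick'' of letting the generators act on the set of normal forms, followed by rewriting any reduced expression into normal form), and it is correct in outline. The two points that carry the real content are exactly the ones you flag: (a) verifying that the defining relations of $G_1$ are respected by the action on normal forms, which is indeed routine but is where all the work sits, and (b) the observation that reducedness prevents the forbidden $f^{\gd}\,1\,f^{-\gd}$ pattern from appearing during right-to-left normalization. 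Point (b) deserves one more line than you give it: when you absorb the subgroup element $d_{i+1}'$ coming from slot $i+1$ into $g_i$, you must check that $d_{i+1}'$ lies in the \emph{same} subgroup ($D_t$ or $D_v$) with respect to which slot $i$ is decomposed precisely when $\gd_{i+1}=-\gd_i$, so that $r_i=1$ would force $g_i$ itself into that subgroup, contradicting reducedness; this sign bookkeeping works out, but it is the hinge of the argument that the length $m$ and the sign sequence are preserved. Granting that, the syllable-by-syllable comparison of the two normalization sequences does recover the elements $w_i$ and $z_i=w_i^{f^{\gd_i}}$ exactly as in the statement.
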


\begin{remark}\label{r:cancellation}
Suppose  $G_1=\lan G, f\mid D_t^f=D_v\ran$ is an HNN extension of $G$ and $g,h\in G_1$ have
reduced expressions
\[g=g_0f^{\gd_1}g_1\cdots g_{m-1}f^{\gd_m}g_m \]
and
\[h=h_0f^{\eta_1}h_1\cdots h_{k-1}f^{\eta_k}h_k.\]

If $gh$ has length $m+k-n$, then it follows from Britton's lemma that $n$ is even and that we can rewrite $h$ as
\[h=g_m^{-1}f^{-\gd_m}g_{m-1}^{-1}\cdots g_{m-n/2}^{-1}f^{-\gd_{m-n/2}}h'_{n/2+1}\cdots h_{k-1}f^{\eta_k}h_k\]

where $h'_{n/2+1}=wh_{n/2+1}$ for some $w\in D_t\cup D_v$.
\end{remark}

As a special case we note the following for future reference:
\begin{remark}
Any involution $s\in G_1$ has a reduced expression
of the form
\[ s=g_1f^{\epsilon_1}g_2f^{\epsilon_2}\ldots g_m f^{\epsilon_m} s_1f^{-\epsilon_m}g_m^{-1}\ldots g_2^{-1}f^{-\epsilon_1}g_1^{-1} \]
for some involution $s_1\in G$.
\end{remark}
\begin{definition}
We call an element $g\in  G_1=\lan G, f\mid D_t^f=D_v\ran$ \emph{cyclically  reduced}
if any reduced expression for $g$ of the form
\[g=at^{\epsilon_1}g_2f^{\epsilon_2}\ldots f^{\epsilon_k}a^{-1}\]
implies $a=1$ and $\epsilon_1=\epsilon_k$.
\end{definition}

As a convenient abbreviation we will write
\[D_1= D_t \mbox{ and\ } D_{-1}=D_v\]
so that for $\epsilon\in\{-1, 1\}$ we have 
\[D_\epsilon f^\epsilon=f^\epsilon D_{-\epsilon}.\]

\begin{remark}\label{r:cycl red}
(i) Clearly, any $g\in G_1$ is conjugate to a cyclically reduced one.
In particular, any $g\in G$ is cyclically reduced.

(ii) If \[g=g_1f^{\epsilon_1}g_2f^{\epsilon_2}\ldots f^{\epsilon_k}g_{k+1} \] is cyclically reduced, then so is \[g^{g_{k+1}^{-1}}=g_{k+1}g_1f^{\epsilon_1}g_2f^{\epsilon_2}\ldots f^{\epsilon_k}.\] 
Otherwise we have $h=g_{k+1}g_1\in D_{\epsilon_1}$ and $\epsilon_1=-\epsilon_k$. Thus $g_{k+1}=hg_1^{-1}$ and we have a reduced expression for $g$ of the form
\[g=g_1f^{\epsilon_1}g_2f^{\epsilon_2}\ldots g_kh^{f^{-\epsilon_k}} f^{\epsilon_k}g_1^{-1}, \] contradicting our assumption that $g$ be cyclically reduced.

(iii) Note that if $g$ is cyclically reduced, then for any $n\in\mathbb{N}$ and any reduced expression for $g$,
we obtain a reduced expression for $g^n$ by concatenating $n$ copies of the reduced expression for $g$.
\end{remark}

\begin{lemma}\label{l:HNN malnormal}\label{l:centralizer in G}
Suppose $G_1=\lan G, f\mid D_t^f=D_v\ran$ and let $g\in G, h\in G_1$ be such that 
$g^h\in G$. Then we have $h\in G$ or there is some $b\in G$ with $g^b\in D_t\cup D_v$.
In particular, if the centralizer of $g$ is not contained in $G$, then  there is some $b\in G$ with $g^b\in D_t\cup D_v$.
\end{lemma}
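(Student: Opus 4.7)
The plan is to apply Britton's lemma directly to a reduced expression for $h$. If $h\in G$ there is nothing to prove, so assume
\[
h = h_0 f^{\delta_1} h_1 \cdots h_{m-1} f^{\delta_m} h_m
\]
is reduced with $m\geq 1$. Then
\[
g^h = h_m^{-1}f^{-\delta_m}h_{m-1}^{-1}\cdots h_1^{-1} f^{-\delta_1}\bigl(h_0^{-1}gh_0\bigr)f^{\delta_1}h_1\cdots h_{m-1}f^{\delta_m}h_m
\]
is a symbolic expression of $f$-length $2m$. Since $g^h\in G$, its actual length in $G_1$ is $0$, so by Britton's lemma this expression cannot be reduced.

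Next, I would observe that the right half of this symmetric expression is literally the reduced expression for $h$, while the left half is its formal inverse; hence reducedness of $h$ forces reducedness of both halves. The only possible failure point is therefore the central transition $f^{-\delta_1}\cdot (h_0^{-1}gh_0)\cdot f^{\delta_1}$, and unreducedness there means precisely $h_0^{-1}gh_0\in D_{\delta_1}\subseteq D_t\cup D_v$, with the convention $D_1=D_t,\ D_{-1}=D_v$. Taking $b=h_0\in G$ yields the required conjugate. The "in particular" statement follows by applying this with any $h\in {\Cen}_{G_1}(g)\setminus G$, for which $g^h=g\in G$.

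The main obstacle I anticipate is really just careful bookkeeping: confirming that the left-half $f$-signs and intermediate factors $h_i^{-1}$ inherit reducedness from those of $h$, so that Britton's lemma pins the first cancellation to the central sandwich rather than to some outer position where we would extract no information about $g$ itself. Once the convention $D_\epsilon^{f^\epsilon}=D_{-\epsilon}$ is in hand, the central pinch gives the claim directly, with no need for induction on $m$.
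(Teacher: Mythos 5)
Your argument is correct and is essentially the paper's own proof: write $h$ in reduced form, note that $g^h\in G$ forces an $f$-cancellation in the symmetric expression for $h^{-1}gh$, and since both halves inherit reducedness from $h$ the pinch must occur at $f^{-\delta_1}(h_0^{-1}gh_0)f^{\delta_1}$, giving $g^{h_0}\in D_t\cup D_v$. The paper states this more tersely but the decomposition and the use of Britton's lemma are identical.
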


\begin{proof}
Write $h\in G_1$ in reduced form,
\[h=h_1f^{\delta_1}h_2\ldots  f^{\delta_m}h_{m+1} .\]
If $h\notin G$ and $g^h\in G$, then $f$-cancellation
must occur at $f^{-\delta_1}h_1^{-1}gh_1f^{\delta_1}$
implying that $g^{h_1}\in D_t\cup D_v$ with $h_1\in G$.
\end{proof}


\section{Construction of $G_1$ and $A_1$ in Proposition~\ref{p:main}}\label{s:construction of G_1}


With the notation as in Theorem \ref{p:main} note that if there is an element 
$f\in A$ such that $t^f=v$, we can just take  $G_1=G$
and $A_1=A$ and there is nothing to prove in Proposition \ref{p:main}.  
Moreover, we may assume that $v$ is a minimal involution relative to $j$:

\begin{lemma}\label{l:reduction to minimal}
It suffices to consider the case that $v$ is a minimal
involution relative to $j$.
\end{lemma}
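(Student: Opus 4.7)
The plan is to reduce the general case of Proposition~\ref{p:main} to the minimal one by first extending $G$ so that the unique minimal involution $v'$ in the $\sim_j$-class of $v$ becomes $A_1$-conjugate to $t$, and then exploiting the fact that $v$ already lies inside the dihedral subgroup $\lan j,v'\ran$ in this extension.

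The observation preceding the lemma disposes of the case $v\in t^A$, so we may assume $v\notin t^A$; by Lemma~\ref{l:invol}(iv) this is the same as $v\notin AtA$, and property~(6) then produces the unique minimal involution $v'$ equivalent to $v$ relative to~$j$. If $v=v'$ there is nothing to prove. Otherwise, noting that $v'\neq j$ (since $jv$ has infinite order by~(2), so $j\not\sim_j v$) and $v'\neq t$ (else $v\sim_j t$, forcing $v\in t^A$ by~(4)), we apply Proposition~\ref{p:main} in the minimal case to $v'$ in place of $v$. This yields an extension $G'\supseteq G$ in which $(1)$--$(6)$ still hold, with $A'=\Cen_{G'}(j)$ and some $f'\in A'$ such that $t^{f'}=v'$.

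By minimality of $v'$ together with $v\sim_j v'$ we have $v\in\lan j,v'\ran=\lan j,t\ran^{f'}$, so $w:=v^{(f')^{-1}}$ is an involution of $\lan j,t\ran$. Since $f'$ centralizes $j$, having $w=j$ would force $v=j$, so $w\in\lan j,t\ran\setminus\{j\}$ and hence $w\sim_j t$. Property~(4), now applied inside $G'$, supplies $a\in A'$ with $t^a=w$; then $f:=af'\in A'$ satisfies $t^f=w^{f'}=v$. Taking $G_1:=G'$ and $A_1:=A'$ completes the reduction. The only step that is not pure bookkeeping is the containment $v\in\lan j,v'\ran$, which is immediate from the definition of minimality; all genuine content is absorbed into the minimal case of the proposition, which is where the real work lies.
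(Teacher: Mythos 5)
Your argument is correct and follows essentially the same route as the paper: apply the minimal case of Proposition~\ref{p:main} to the unique minimal involution $v'$ in the class of $v$, then use property~(4) in the extension together with the new conjugator $f'\in A_1$ to carry $t$ onto $v$. Your detour through $v\in\lan j,v'\ran$ via minimality is a harmless variant of the paper's observation that $A_1$-conjugation by $f'$ identifies the $\sim_j$-class of $t$ with that of $v'$, so all involutions equivalent to $v'$ become $A_1$-conjugate to $t$.
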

\begin{proof}
Suppose we have constructed $(G_1,A_1)$ as in Proposition~\ref{p:main} for a minimal involution $v$. Then $v$ is conjugate under $A_1$ to $t$.
Since all involutions equivalent to
$t$ are conjugate under $A\leq A_1$, all involutions equivalent to 
$v$ will be conjugate under $A_1$.
\end{proof}

So let $v$ be a minimal involution relative to $j$ and put
\[D_t=\langle j,t\rangle \mbox{ and } D_v=\langle j,v\rangle.\]
Note that $D_t$ and $D_v$ are infinite dihedral groups and hence isomorphic under an isomorphism $f$ fixing $j$ and taking $t$ to $v$.

We may now define the extension $G_1$  required in Proposition~\ref{p:main} as the HNN-extension
of $G$ by $f$ taking $D_t$ to $D_v$, i.e.

\[ G_1 =\lan G, f\mid j^f=j, t^f=v\ran.\]

Proposition~\ref{p:main} will be proved once we show that
$G_1$ and $A_1={\Cen}_{G_1}(j)$ satisfy the required properties in Proposition~\ref{p:main}. Clearly, $f\in A_1$ and $t^f=v$.

\section{Properties (1) -- (6) hold in $G_1$ }\label{s:1-6}

We have to verify that $G_1$ and $A_1={\Cen}_{G_1}(j)$ satisfy the conditions of Proposition~\ref{p:main}:

\begin{enumerate} 
\item[{(1)}] 
 all involutions in $G_1$ are conjugate;

\item[{(2)}] 
 any two distinct involutions of $G_1$ generate infinite dihedral groups;

\item[{(3)}] 
 for any $n>0$ there is a unique involution $s\in G_1$ with
$(js)^n=jt$.

\item[{(4)}]  
for any involution $s\in G_1$ equivalent to $t$  there is some $a\in A_1$ such that $s=t^a$.

\item[{(5)}]  
for any involution $s\in G_1$, we have ${\Cen}(js)=\Tr(\overline{\lan j,s\ran}^{G_1})$.

\item[{(6)}]  
 for any involution $s\notin A_1tA_1$ there is a
unique minimal involution $s'$ equivalent to $s$ relative to $j$.

\end{enumerate}

It is well-known that Properties (1) and (2) are preserved under HNN-extensions.
For the remaining properties, we collect some easy observations:

\begin{lemma}\label{l:invol in A1}
The involution $j$ is the only involution in $A_1$. 
\end{lemma}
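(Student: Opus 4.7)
The plan is to derive a contradiction from the assumption that $A_1$ contains an involution $s \neq j$. The key tool is property (2), which the paper has just noted is preserved under HNN-extensions: any two distinct involutions of $G_1$ generate an infinite dihedral group.

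Applied to the pair $j, s$, this would force $\langle j, s\rangle$ to be infinite dihedral. But in any infinite dihedral group $\langle a, b \mid a^2 = b^2 = 1\rangle$ the product $ab$ has infinite order, so $(ab)^2 \neq 1$, which is equivalent to $ab \neq ba$. In particular the two defining involutions never commute. This directly contradicts $s \in A_1 = \mathrm{Cen}_{G_1}(j)$, so we must have $s = j$.

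The only delicate point is the appeal to property (2) inside $G_1$, but the paper takes this for granted as a well-known feature of HNN-extensions (here applied to the associated dihedral subgroups $D_t,D_v$). A completely self-contained alternative would instead use Lemma~\ref{l:centralizer in G} together with Britton's lemma: if $s\in A_1\setminus G$ were an involution, write it in the reduced form for involutions in HNN-extensions, namely
\[
s = g_1 f^{\epsilon_1}\cdots g_m f^{\epsilon_m}\, s_0\, f^{-\epsilon_m} g_m^{-1}\cdots f^{-\epsilon_1} g_1^{-1},
\]
and use that $j^{g_1}\in D_t\cup D_v$ (since $j^s = j\in G$ and $s\notin G$) to successively conjugate so as to strip off the outer $f^{\pm 1}$'s, exploiting the fact that $f$ and $j$ commute. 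One ends up with $s_0\in A$, whence $s_0 = j$ by Lemma~\ref{l:invol}(iii), and then tracing back the conjugations one shows that in fact $s = j$. Either route finishes the lemma, but the argument via preservation of (2) is essentially immediate and is the one I would write.
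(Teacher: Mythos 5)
Your primary argument is correct but takes a genuinely different route from the paper. The paper proves the lemma by a direct Britton's-lemma computation: it writes an alleged involution $s\in A_1\setminus G$ in the reduced conjugate form $s=g_1f^{\epsilon_1}\cdots g_mf^{\epsilon_m}s_1f^{-\epsilon_m}g_m^{-1}\cdots g_1^{-1}$, expands $sjs=j$, and forces cancellation step by step until $s_1$ is shown to centralize, hence equal, the conjugate $j_1=j^{g_1f^{\epsilon_1}\cdots g_mf^{\epsilon_m}}\in D_{-\epsilon_m}$, contradicting reducedness. That is essentially the ``self-contained alternative'' you sketch at the end. What you actually submit instead is the observation that the lemma follows at once from Property (2) in $G_1$: since any two distinct involutions of $G_1$ generate an infinite dihedral group, no involution other than $j$ can commute with $j$, so $j$ is the only involution in $A_1={\Cen}_{G_1}(j)$. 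This deduction is valid and shorter, and it has the small advantage of handling the case $s\in A_1\cap G=A$ uniformly, whereas the paper's proof silently delegates that case to Lemma~\ref{l:invol}(iii). The trade-off is that your route rests entirely on the assertion, stated in the paper without proof as ``well-known,'' that Property (2) is preserved under HNN-extensions (i.e.\ on the torsion and finite-subgroup theorems for HNN-extensions), while the paper's computation is self-contained and rehearses the cancellation pattern that recurs in the harder lemmas of Section~\ref{s:1-6}. Both routes are sound.
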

\begin{proof}
Suppose $s\in A_1\setminus G$ is an involution. Write $s$ in reduced form as
\[s=g_1f^{\epsilon_1}g_2f\ldots g_m f^{\epsilon_m} s_1f^{-\epsilon_m}g_m^{-1}\ldots g_2^{-1}f^{-\epsilon_1}g_1^{-1}\]
for some involution $s_1\in G$.
Then
\[sjs=g_1f^{\epsilon_1}\ldots g_m f^{\epsilon_m} s_1f^{-\epsilon_m}g_m^{-1}\ldots f^{-\epsilon_1}g_1^{-1}jg_1f^{\epsilon_1}\ldots g_m f^{\epsilon_m} s_1f^{-\epsilon_m}g_m^{-1}\ldots f^{-\epsilon_1}g_1^{-1}=j \]

Clearly $f$-cancellation must occur at $f^{-\epsilon_1}g_1^{-1}jg_1f^{\epsilon_1}$. We
 conclude inductively that $j_1=j^{g_1f^{\epsilon_1}g_2f^{\epsilon_2}\ldots g_m f^{\epsilon_m}}\in D_{-\epsilon_m}$ and $s_1\in {\Cen}(j_1)$. By assumption on $G$ we have $s_1=j_1\in D_{-\epsilon_m}$, contradicting our assumption that the expression for $s\in G_1\setminus G$ was reduced.
\end{proof}

\begin{lemma}\label{l:nonconjugate}
 If $g\in D_t$ is a nontrivial translation, then $g^b\notin D_v$  for all $b\in G$. 
\end{lemma}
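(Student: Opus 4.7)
Argue by contradiction: since conjugates of nontrivial translations are nontrivial translations, a putative $G$-conjugation of an element of $D_t$ into $D_v$ must take $(jt)^n$ to $(jv)^m$ for some $n,m\neq 0$. Assume then $b^{-1}(jt)^n b=(jv)^m$, and aim to deduce $t\sim_j v$---which, by (4), means $v\in AtA$, contradicting the standing assumption of the genuine case of Proposition~\ref{p:main}.

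The engine of the argument is a centralizer identity: for any $m\neq 0$,
\[
  {\Cen}_G\bigl((jv)^m\bigr)=\langle jv\rangle.
\]
This follows from Property~(5) applied to the involution $s'\df j(jv)^m\in\langle j,v\rangle\setminus\{j\}$: by the remark before Lemma~\ref{l:invol}, $s'\sim_j v$, so by minimality of $v$ we have $\overline{\langle j,s'\rangle}^G=\langle j,v\rangle$, giving ${\Cen}((jv)^m)={\Cen}(js')=\Tr\langle j,v\rangle=\langle jv\rangle$. The same reasoning, applied to a minimal involution $s^*\sim_j t$, gives ${\Cen}((jt)^n)=\langle js^*\rangle$. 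Now conjugacy of the two translations yields conjugacy of their centralizers, $b^{-1}\langle js^*\rangle b=\langle jv\rangle$, so $b^{-1}(js^*)b=(jv)^{\pm 1}$; replacing $b$ with $bj$ if necessary we may assume $(js^*)^b=jv$, i.e.\ $(j^b)(s^{*b})=jv$.

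Lemma~\ref{l:translation_product} forces $j^b,s^{*b}\sim_j v$, so by minimality both lie in $\langle j,v\rangle$. I then split on $j^b$. If $j^b=j$ (so $b\in A$), then $s^{*b}=v$ gives $s^*\in AvA$, but by (4) also $s^*\in AtA$, so $v\in As^*A\subseteq AtA$, contradiction. If $j^b\neq j$, then $j^b$ is a nontrivial involution of $\langle j,v\rangle$, so $\langle j,v\rangle$-conjugate either to $j$ or to $v$. In the first case I factor $b=ac$ with $a\in A$, $c\in\langle j,v\rangle$; since $c$ normalises $\langle jv\rangle$, the equation reduces (after possibly adjusting by $j$) to the previous case with $a$ in place of $b$. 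In the second case I factor $b=hc$ with $c\in\langle j,v\rangle$ and $j^h=v$; the same reduction reveals a $g\in G$ (with $j^g=v$) producing $s^{*g}\in\langle j,v\rangle$, whence $D_t^g=D_v$.

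The main obstacle is this last sub-case: closing the contradiction once $D_t^g=D_v$ requires propagating $s^*\in AtA$ through a translation by $g$ back into $AvA$. The natural tool is the analog of (4) for $v$---that any involution $\sim_j v$ is $A$-conjugate to $v$---which is not among the listed axioms but should follow, in characteristic $0$, from (1), (4), and a transport argument using a fixed conjugator between $t$ and $v$ in $G$. Handling this transport carefully, and reconciling the resulting $A$-action on involutions equivalent to $v$ with the minimality of $v$ in $\langle j,v\rangle$, is the technical heart of the proof.
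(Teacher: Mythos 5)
Your overall strategy---derive $t\sim_j v$ and contradict the standing assumption that $v$ is not $A$-conjugate to $t$---aims at the right target, and your treatment of the $v$-side is sound: Property (5) together with the minimality of $v$ does give ${\Cen}\bigl((jv)^m\bigr)=\Tr(\lan j,v\ran)=\lan jv\ran$. The proof breaks at the symmetric step: there is \emph{no} minimal involution $s^*$ equivalent to $t$. By Property (3), for every $n$ there is an involution $s_n$ with $(js_n)^n=jt$, so $\lan j,t\ran$ is properly contained in $\lan j,s_n\ran$ for $n\geq 2$ and the dihedral groups through the class of $t$ form a strictly increasing chain with no maximal member; this is precisely why Property (6) excludes the class of $t$. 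Consequently ${\Cen}\bigl((jt)^n\bigr)=\Tr(\overline{\lan j,t\ran})=\bigcup_k\lan js_k\ran$ is locally cyclic but not cyclic, and the pivotal step ``$b^{-1}\lan js^*\ran b=\lan jv\ran$, hence $b^{-1}(js^*)b=(jv)^{\pm1}$'' has no starting point. On top of this you concede an unresolved sub-case at the end (transporting $s^*\in AtA$ into $AvA$ via an unproved analogue of (4) for $v$), so even granting $s^*$ the argument is not closed.

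Two remarks. First, the very asymmetry that kills your $s^*$ rescues the centralizer idea in one line: conjugation by $b$ is an isomorphism from ${\Cen}\bigl((jt)^n\bigr)$ onto ${\Cen}\bigl((jv)^m\bigr)$ carrying $(jt)^n$ to $(jv)^m\neq1$; but $(jt)^n$ has a $k$-th root in $\Tr(\overline{\lan j,t\ran})$ for every $k$ (namely $(js_k)^n$), while no nontrivial element of $\lan jv\ran\cong\zz$ is divisible by every $k$. Second, the paper's actual proof runs differently and avoids your case analysis entirely: it reduces to $(jt)^b=(jv)^n$ with $n>0$, uses (3) and (4) to produce $a\in A$ with $\bigl((jt)^a\bigr)^n=jt$, deduces $(jt)^{ab}=jv$ from uniqueness of roots (Lemma~\ref{l:linear}), and then applies Lemma~\ref{l:translation are conjugate} to the pair $(j^{b^{-1}a^{-1}},v^{b^{-1}a^{-1}})$ to manufacture $h\in{\Cen}(j)=A$ with $t^h=v$, the desired contradiction; no transitivity of $A$ on the class of $v$ is ever needed.
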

\begin{proof} 
By Lemma~\ref{l:translation are conjugate}  it suffices to consider $g=jt$. Suppose $(jt)^b\in D_v$.
By replacing $b$ by $bj$ if necessary, we may assume $(jt)^b=(jv)^n$ for some $n>0$.
By properties (3) and (4) there is some $a\in A$ be such that $(jt^a)^n=jt$, so $(jt)^{ab}=jv$ by Lemma~\ref{l:linear}.
By Lemma~\ref{l:translation are conjugate} there is some $g\in G$ such that
\[j^{b^{-1}a^{-1}g}=j \mbox{ and } v^{b^{-1}a^{-1}g}=t.\]
Then $h=g^{-1}ab\in {\Cen}(j)=A$ and $t^h=v$, contradicting the assumption on~$v$.
\end{proof}

\begin{lemma}\label{l:centralizer of translations}
For any translation $w\in\lan j,t\ran$ the centralizer of $w$ in $G_1$ coincides with the centralizer of $w$ in $G$.
In particular we have 
\[{\Cen}_G(w)=\Tr(\overline{\langle j,t\rangle}^G)=\Tr(\overline{\langle j,t\rangle}^{G_1} )={\Cen}_{G_1}(w).\]
\end{lemma}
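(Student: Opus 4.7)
The plan is to establish the main equality ${\Cen}_{G_1}(w) = {\Cen}_G(w)$ for any non-trivial translation $w \in \lan j,t\ran$. Once this is done, the ``in particular'' chain follows routinely: the first identity comes from property~(5) in $G$ (writing $w = js$ with $s$ equivalent to $t$, so that $\overline{\lan j,s\ran}^G = \overline{\lan j,t\ran}^G$), the last from the main equality, and the middle identity from the observation that any involution $s' \in G_1$ equivalent to $t$ relative to $j$ satisfies $(js')^p = (jt)^q$ for some $p,q \neq 0$, hence $js' \in {\Cen}_{G_1}((jt)^q) = {\Cen}_G((jt)^q) \subseteq G$, forcing $s' \in G$.

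The non-trivial inclusion of the main equality is ${\Cen}_{G_1}(w) \subseteq {\Cen}_G(w)$, which I prove by induction on the HNN-length $m$ of a reduced expression
\[
h = h_0 f^{\delta_1} h_1 \cdots f^{\delta_m} h_m
\]
for $h \in G_1$ centralizing $w$. The base case $m = 0$ is immediate. For $m \geq 1$, apply Britton's lemma to $hwh^{-1} = w$: the innermost letters $f^{\delta_m}(h_m w h_m^{-1}) f^{-\delta_m}$ must collapse. Since Lemma~\ref{l:nonconjugate} forbids the conjugate of $w \in D_t$ from landing in $D_v$, we are forced to $\delta_m = -1$, and $u_1 := (h_m w h_m^{-1})^f \in D_v$ (a non-trivial translation) now occupies the innermost position of the shortened word. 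The case $m = 1$ collapses immediately to $h_0^{-1} w h_0 = u_1 \in D_v$, again contradicting Lemma~\ref{l:nonconjugate}; so assume $m \geq 2$. At the next layer the dichotomy returns with the roles of $D_t$ and $D_v$ swapped: $\delta_{m-1} = -1$ would give $h_{m-1} u_1 h_{m-1}^{-1} \in D_t$, and applying Lemma~\ref{l:nonconjugate} to this element of $D_t$, conjugated by $h_{m-1}$ back into $D_v$, produces a contradiction.

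The main obstacle is therefore the remaining case $\delta_{m-1} = +1$: reducedness demands $h_{m-1} \notin D_v$, while the cancellation step demands $h_{m-1} u_1 h_{m-1}^{-1} \in D_v$. This is where minimality of $v$ enters decisively. Both $u_1$ and its conjugate are non-trivial translations in $\lan jv\ran$, so writing $u_1 = j \cdot s$ with $s = j(jv)^k$ (an involution equivalent to $v$ relative to $j$, by direct check), property~(5) yields ${\Cen}_G(u_1) = \Tr(\overline{\lan j,s\ran}^G)$, and minimality of $v$ collapses $\overline{\lan j,s\ran}^G = \overline{\lan j,v\ran}^G$ to $\lan j,v\ran$, giving ${\Cen}_G(u_1) = \lan jv\ran$; the same calculation applies to ${\Cen}_G(h_{m-1} u_1 h_{m-1}^{-1})$. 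Conjugation by $h_{m-1}$ therefore maps $\lan jv\ran$ onto itself, so $h_{m-1} \in N_G(\lan jv\ran) = \lan jv\ran \cup j\lan jv\ran = D_v$, contradicting reducedness. This completes the induction and proves the lemma.
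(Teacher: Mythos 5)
Your proof is correct and follows essentially the same route as the paper's: apply Britton's lemma to the conjugation equation, use Lemma~\ref{l:nonconjugate} at the two innermost layers to pin down the signs of the stable-letter exponents, and then force the offending letter into $D_v$ to contradict reducedness, with your centralizer/normalizer computation for $\lan jv\ran$ being exactly the content of the paper's Lemma~\ref{l:dihedral properties} (proved there from property~(5) and minimality of $v$, just as you do inline). The only cosmetic differences are that you work with a general nontrivial translation $w$ rather than first normalizing to $w=jt$, and that your ``induction'' is really a direct contradiction since every case with $m\geq 1$ is excluded outright.
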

\begin{proof}

By Lemma~\ref{l:translation are conjugate} we may assume $w=jt$.
Let $g\in {\Cen}_{G_1}(jt)\setminus G$ have a reduced expression
\[g=g_1f^{\epsilon_1}g_2f^{\epsilon_2}\ldots f^{\epsilon_k}g_{k+1}.\]
 
and suppose towards a contradiction that $k>0$.
Then 
\[jt=g^{-1}_{k+1}f^{-\epsilon_k}g^{-1}_k\ldots f^{-\epsilon_1}g_1^{-1}jt 
g_1f^{\epsilon_1}g_2f^{\epsilon_2}\ldots f^{\epsilon_k}g_{k+1}.\hspace{1.5cm} (*)\]

By Britton's lemma, cancellation must occur at
\[f^{-\epsilon_1}g_1^{-1}jt 
g_1f^{\epsilon_1}\]

implying that $g_1^{-1}jt g_1\in D_t$ by Lemma~\ref{l:nonconjugate}  and hence $\epsilon_1=1$. 

Thus we have \[f^{-\epsilon_1}g_1^{-1}jt 
g_1f^{\epsilon_1}\in D_v.\]

Again by Lemma~\ref{l:nonconjugate} we must have $k\geq 2$. 
Since all the $f$-occurrences in $(*)$ cancel, by Lemma~\ref{l:nonconjugate} and  Lemma~\ref{l:dihedral properties}  we must have $g_2\in D_v$ and $\epsilon_2=-1$,
contradicting the assumption that the expression for $g$ was
reduced.
\end{proof}

\begin{cor}\label{c:equivalence_class}
If $s\in G_1$ is equivalent to $t$ relative to $j$, then
$s\in G$.
\end{cor}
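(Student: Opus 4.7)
The plan is to deduce the corollary as a one-step consequence of Lemma~\ref{l:centralizer of translations}. By the definition of the equivalence relation relative to $j$, the hypothesis $s\sim_j t$ provides integers $m,n\in\mathbb{Z}$ with $(js)^m=(jt)^n$. The case $s=j$ is trivial since $j\in G$, so I assume $s\neq j$. Property~(2), which is preserved in $G_1$, ensures that $j$ and $s$ generate an infinite dihedral group, so $js$ has infinite order, and any nontrivial instance of the equivalence forces $m,n\neq 0$. Set $w:=(jt)^n=(js)^m$; this is a nontrivial translation lying in $\lan j,t\ran$.

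Since $js$ commutes with its own power $w=(js)^m$, we have $js\in{\Cen}_{G_1}(w)$. But Lemma~\ref{l:centralizer of translations} asserts precisely that ${\Cen}_{G_1}(w)={\Cen}_G(w)\subseteq G$. Consequently $js\in G$, and multiplying by $j$ yields $s\in G$, as required.

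The substantive work has already been absorbed into Lemma~\ref{l:centralizer of translations}, whose proof combined Britton's lemma with Lemma~\ref{l:nonconjugate} to rule out any element of positive $f$-length in the centralizer of a translation of $\lan j,t\ran$. Hence there is no further obstacle at the level of the corollary itself: the only point to keep an eye on is that the common value $(js)^m=(jt)^n$ can be taken nontrivial, which is automatic once $s\neq j$ by property~(2) in $G_1$.
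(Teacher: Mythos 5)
Your proof is correct and follows essentially the same route as the paper: both reduce the corollary to Lemma~\ref{l:centralizer of translations}, the only cosmetic difference being that you apply it to the nontrivial translation $(jt)^n$ while the paper phrases the step as $js$ centralizing $jt$ itself. Your extra care in disposing of the case $s=j$ and noting that $m,n$ can be taken nonzero is harmless and slightly more explicit than the paper's one-line argument.
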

\begin{proof}
Let $s\in G_1$ be such that $(js)^m=(jt)^n$. Then $js$ centralizes $jt$, so $js\in G$ by Lemma~\ref{l:centralizer of translations} and hence $s\in G$.
\end{proof}
Clearly, Corollary~\ref{c:equivalence_class} implies that Properties (3) and (4) continue to hold in $G_1$:
\begin{cor}\label{c:property 5} 
(i) For any $n>0$ there is a unique involution $s\in G_1$ with
$(js)^n=jt$.

(ii) For any involution $s\in G_1$ equivalent to $t$  there is some $a\in A_1$ such that $s=t^a$.
 \qed
\end{cor}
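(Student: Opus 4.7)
The plan is to derive both parts of the corollary essentially for free from Corollary~\ref{c:equivalence_class}, which has already established the crucial fact that every involution of $G_1$ equivalent to $t$ relative to $j$ must already lie in $G$. Once we have this, we need only observe that the corresponding properties (3) and (4) were assumed to hold inside $G$, and that $A\subseteq A_1$.

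For part (i), suppose $s\in G_1$ is an involution with $(js)^n=jt$. Then taking $m=1$ in the definition of equivalence shows that $s$ is equivalent to $t$ relative to $j$. By Corollary~\ref{c:equivalence_class}, $s\in G$. Hence the uniqueness asserted in the corollary is just the uniqueness asserted by assumption (3) applied inside $G$, together with the observation that every candidate involution in $G_1$ has been shown to live in $G$.

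For part (ii), if $s\in G_1$ is an involution equivalent to $t$, then again $s\in G$ by Corollary~\ref{c:equivalence_class}. By assumption (4) applied in $G$, there exists $a\in A$ with $s=t^a$. Since $A={\Cen}_G(j)$ and $j\in G$, any such $a$ commutes with $j$ also in the overgroup $G_1$, so $a\in {\Cen}_{G_1}(j)=A_1$, as required.

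There is no genuine obstacle: all the substance has already been absorbed into the chain Lemma~\ref{l:centralizer of translations} $\Rightarrow$ Corollary~\ref{c:equivalence_class}. The only thing to notice is that neither property (3) nor property (4) changes its truth value when passing from $G$ to $G_1$, precisely because the only elements of $G_1$ that could possibly witness a failure, namely involutions equivalent to $t$, have been pinned down inside $G$.
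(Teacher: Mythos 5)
Your proposal is correct and follows exactly the paper's intended argument: the paper states this corollary with only the remark that it is ``clearly'' implied by Corollary~\ref{c:equivalence_class}, and your write-up simply makes that implication explicit by reducing both parts to assumptions (3) and (4) in $G$ together with $A\subseteq A_1$.
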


We next verify Property (5) by a variant of the \emph{Euclidean algorithm}.
Recall that we write
\[D_1= D_t \mbox{ and\ } D_{-1}=D_v\]
so that for $\epsilon\in\{-1, 1\}$ we have 
\[D_\epsilon f^\epsilon=f^\epsilon D_{-\epsilon}.\]

We first need the following lemma:
\begin{lemma}\label{l:cycl reduced translation}
Let $r\in G, s\in G_1$ be involutions. Then there is a cyclically reduced
translation $r's'$ conjugate to $rs$ where $r'\in G, s'\in G_1$ are involutions.
\end{lemma}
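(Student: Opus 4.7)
The plan is to proceed by induction on the $f$-length of the involution $s$, which is well-defined by Britton's lemma (Theorem~\ref{r:Britton alternative}). The base case is $s\in G$: then $rs\in G$ has $f$-length zero and is already cyclically reduced, so we may take $r'=r$ and $s'=s$.

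For the inductive step, I would use the palindromic reduced form for involutions in $G_1\setminus G$ noted just after Britton's lemma: writing
\[
s\;=\;g_1 f^{\eta_1} g_2 f^{\eta_2}\cdots g_m f^{\eta_m}\,s_1\,f^{-\eta_m} g_m^{-1}\cdots g_2^{-1} f^{-\eta_1} g_1^{-1}
\]
with $m\geq 1$ and $s_1\in G$ an involution, and setting $x=g_1 f^{\eta_1}\cdots g_m f^{\eta_m}$, so that $s=xs_1x^{-1}$. I would split into two cases according to whether $g_1^{-1}rg_1$ lies in $D_{\eta_1}$ or not.

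In Case~A, where $g_1^{-1}rg_1\notin D_{\eta_1}$, I would conjugate $rs$ first by $x$ and then by $x^{-1}rx$, obtaining $s_1\cdot(x^{-1}rx)$. Setting $r'=s_1\in G$ and $s'=x^{-1}rx\in G_1$ yields two involutions whose product has a natural expression in which all interior $f$-pairs inherit reducedness from $s$, the single new central pair $f^{-\eta_1}(g_1^{-1}rg_1)f^{\eta_1}$ does not collapse precisely by the Case~A hypothesis, and the cyclic wrap-around $f^{\eta_m}s_1f^{-\eta_m}$ is harmless because the reducedness of $s$ forces $s_1\notin D_{-\eta_m}$. Hence $r's'$ is cyclically reduced, as required.

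In Case~B, where $g_1^{-1}rg_1\in D_{\eta_1}$, I would conjugate $rs$ by $g_1 f^{\eta_1}$ to peel off the outermost $f$-layer of $s$, obtaining $r_0'\cdot s''$, where $r_0':=f^{-\eta_1}(g_1^{-1}rg_1)f^{\eta_1}$ lies in $D_{-\eta_1}\subseteq G$ and is an involution, while $s''=g_2 f^{\eta_2}\cdots g_m f^{\eta_m} s_1 f^{-\eta_m}\cdots g_2^{-1}$ is the inner palindrome, an involution in $G_1$ of $f$-length $2(m-1)$, strictly smaller than that of $s$. Applying the inductive hypothesis to the pair $(r_0',s'')$ then yields the desired cyclically reduced translation, conjugate to $r_0's''$ and hence to $rs$. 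I expect the main obstacle to be the bookkeeping in Case~A, carefully verifying (via the cancellation rule that $f^\epsilon g f^{-\epsilon}$ collapses iff $g\in D_{-\epsilon}$) that the natural expression for $s_1\cdot(x^{-1}rx)$ is indeed reduced; the case split is engineered so that in Case~A the one new $f$-pair is harmless, while in Case~B it is traded for a genuine decrease in the $f$-length of $s$ that drives the induction.
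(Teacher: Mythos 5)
Your argument is correct and follows essentially the same route as the paper: both proofs peel off the outermost layer of the palindromic form of $s$ by conjugating by $g_1f^{\eta_1}$ whenever $g_1^{-1}rg_1\in D_{\eta_1}$, and terminate otherwise, with the reducedness of the expression for $s$ guaranteeing that the wrap-around pair never collapses. The only (harmless) difference is that in your terminal Case~A you perform two further conjugations to present the result as $s_1\cdot(x^{-1}rx)$, whereas the paper simply stops at $(g_1^{-1}rg_1)\cdot(g_1^{-1}sg_1)$, which is already cyclically reduced and of the required form.
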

\begin{proof}
Write $s\in G_1$ in reduced form
\[s= s_1f^{\epsilon_1}s_2f^{\epsilon_2}\ldots s_m f^{\epsilon_m} s'f^{-\epsilon_m}g_m^{-1}\ldots s_2^{-1}f^{-\epsilon_1}s_1^{-1}.\]

If $rs$ is not cyclically reduced, then $w_1=s_1^{-1}rs_1\in D_{\epsilon_1}$. Put 
$ z_1=w_1^{f^{\epsilon_1}}.$ Then
\[(rs)^{s_1f^{\epsilon_1}}=f^{-\epsilon_1}w_1f^{\epsilon_1}s_2f^{\epsilon_2}\ldots f^{-\epsilon_2}s_2^{-1}= z_1s_2f^{\epsilon_2}\ldots f^{-\epsilon_2} s_2^{-1}.\] 

By iterating this step, the length decreases and we end with a cyclically reduced conjugate of $rs$ in the required form.
\end{proof}

Now we can show that Property (5) continues to hold in $G_1$:
\begin{lemma}\label{l:euclid final}
For involutions $r\in G,s\in G_1$, we have ${\Cen}(rs)=\Tr(\overline{\lan r,s\ran}^{G_1})$.
\end{lemma}
\begin{proof}
By Lemma~\ref{l:cycl reduced translation} we may assume that $rs$ is cyclically reduced.
If $rs\in G$, the claim holds 
by Lemmas~ \ref{l:centralizer in G} and \ref{l:centralizer of translations} and assumption on $G$. 
Hence we may assume $rs\in G_1\setminus G$.

Write $rs$ as a reduced expression
\[rs=rg_1f^{\epsilon_1}g_2f^{\epsilon_2}\ldots g_m f^{\epsilon_m} s_1f^{-\epsilon_m}g_m^{-1}\ldots g_2^{-1}f^{-\epsilon_1}g_1^{-1} \]
for involutions $r,s_1\in G$. Note that by Remark~\ref{r:cycl red} we may also assume $g_1=1$ and so $r\notin D_{\epsilon_1}$.
Consider $h\in {\Cen}(rs)$.

If $h\in G$, then by Britton's lemma and the equality $rsh=hrs$ we have $h, h^r\in D_{\epsilon_1}$.
If $h$ is a translation,
then by Lemma~\ref{l:centralizer of translations} we have $h\in D_v$, so $\epsilon_1=-1$.  By Lemma~\ref{l:dihedral properties} again
$r\in D_v=D_{\epsilon_1}$, contradicting
our assumption on $rs$. 
Hence $h\in D_{\epsilon_1}$ is an involution centralizing $rs$.
Since
\[rhrsh=s=hsrhr,\]
we see that the translation $h^rh=rhrh$ is inverted by $s$ and hence centralized by $js\in G_1\setminus G$. By Lemma~\ref{l:centralizer of translations} we have $h^rh\in D_v$, so again ${\epsilon_1}=-1$.
Now $(hh^r)^r=h^rh$ and so $r$ inverts $hh^r$. Lemma~\ref{l:dihedral properties} implies $r\in D_v=D_{\epsilon_1}$, contradicting
our assumption on $r$. 

Hence $h\in G_1\setminus G$.  Since $rs$ is cyclically reduced, by considering $h^{-1}$ if necessary, we may assume that $h$ has no reduced expression starting with $f^{\epsilon_1}$. The lemma will follow from:

\medskip
\noindent
{\bf Claim:} There is an involution $t_1\in G_1\setminus G$ of length   $d=\gcd(2m,k)$ such that $h,rs\in \langle rt_1\rangle$.
\medskip

Write $h=h_1f^{\delta_1}h_2\ldots h_kf^{\delta_k}h_{k+1}$.
By assumption on $h$ and Remark~\ref{r:cancellation} the expression
\[rsh=rf^{\epsilon_1}g_2f^{\epsilon_2}\ldots g_2^{-1}f^{-\epsilon_1}h_1f^{\delta_1}h_2\ldots f^{\delta_k}h_{k+1}\]
is reduced and since $rsh=hrs$, by Britton's lemma so is
\[hrs=h_1f^{\delta_1}h_2\ldots  f^{\delta_k}h_{k+1}rf^{\epsilon_1}g_2f^{\epsilon_2}\ldots g_2^{-1}f^{-\epsilon_1}.\]

Again by Britton's lemma, we see that $\delta_1=\epsilon_1=-\delta_k$. In particular $k\geq 2$ and we may assume $h_1=r, h_{k+1}=1$. So  $h$ is cyclically reduced as well. 
Now consider 
\[rsh=rf^{\epsilon_1}g_2f^{\epsilon_2}\ldots g_2^{-1}f^{-\epsilon_1}rf^{\epsilon_1}h_2f^{\delta_2}\ldots h_kf^{-\epsilon_1}=\]
\[hrs=rf^{\epsilon_1}h_2f^{\delta_2}\ldots h_k f^{-\epsilon_1}rf^{\epsilon_1}g_2f^{\epsilon_2}\ldots g_2^{-1}f^{-\epsilon_1}.\]

Write $k= n\cdot 2m +l$ with $l<2m$.
We can rewrite $h$ according to Britton's lemma as
\[h=rf^{\epsilon_1}g_2\ldots f^{-\epsilon_1}rf^{\epsilon_1}\ldots  f^{-\epsilon_1} rf^{\epsilon_1}\ldots  f^{-\epsilon_1}rf^{\epsilon_1}g_2f^{\epsilon_2}\ldots f^{\epsilon_l} w.\]
where $w\in D_{-\epsilon_l}$.
If $l=0$, the claim follows. Otherwise $h'=(rs)^{-n}h \in {\Cen}(rs)$ has a reduced expression
\[h'=rf^{\epsilon_1}g_2f^{\epsilon_2}\ldots f^{\epsilon_l} w\]
where $\epsilon_l=-\epsilon_1,\epsilon_{l-1}=-\epsilon_2$ etc.

Since $r\notin D_{\epsilon_1}$, we see that  $h'\in{\Cen }(rs)$ is again cyclically reduced.
We can therefore iterate the procedure with $h', rs$ in place of $rs, h$, rewriting $rs$ in reduced form as a concatenation of copies of 
the reduced expression for $h'$ followed by an initial segment of $h'$ and $r$.

This procedure stops with a reduced expression of length $d=\gcd(m,k)\geq 2$ of the form
\[h''=rf^{\epsilon_1}g_2\ldots g_2^{-1}f^{-\epsilon_1} \in{\Cen}(rs).\]
So $h''=rt_1$ for some
involution $t_1\in G_1$  equivalent to $s$ and of length $d$ with and $h,rs\in\langle rt_1\rangle$.
\end{proof}

Note the following corollary of the proof:
\begin{cor}\label{c:translation centralizer}
If $s\in G_1\setminus G$ is such that $js$ is cyclically reduced, then there is an involution $t_1\in G_1\setminus G$ such that $jt_1$ is cyclically reduced and ${\Cen}(js)=\langle j t_1\rangle$. In particular, $t_1$ is the unique minimal involution equivalent to $s$.
\end{cor}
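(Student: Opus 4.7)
The plan is to harvest the cyclic structure of $\Cen(js)$ directly from the Claim embedded in the proof of Lemma~\ref{l:euclid final}. That Claim, specialized to our situation, says: for $js$ cyclically reduced and any nontrivial $h\in\Cen(js)$ (which the same proof forces to lie in $G_1\setminus G$ and to be cyclically reduced), there is an involution $t_h\in G_1\setminus G$ with $jt_h$ cyclically reduced, both $js$ and $h$ lying in $\langle jt_h\rangle$, and the length of $jt_h$ equal to the greatest common divisor of the lengths of $js$ and $h$. The only additional ingredient I will use is the equality $\Cen(js_1)=\Cen(js)$ whenever $s_1\sim_j s$, which is immediate from Lemma~\ref{l:euclid final} together with $\overline{\langle j,s_1\rangle}^{G_1}=\overline{\langle j,s\rangle}^{G_1}$.

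First I would record the preliminary fact that $\Cen(js)\setminus\{1\}\subseteq G_1\setminus G$: if $h=js'\in\Cen(js)$ with $s'\in G$, then a relation $(js')^m=(js)^n$ with $n\neq 0$ would force a nontrivial power of $js$ into $G$, contradicting that $js$ is cyclically reduced of positive length (Remark~\ref{r:cycl red}(iii)). Applying the Claim with $h=js$ itself shows that the set of involutions $t\in G_1\setminus G$ with $jt$ cyclically reduced and $jt\in\Cen(js)$ is nonempty; I would then choose $t_1$ in this set whose associated $jt_1$ has minimal length $d$. The inclusion $\langle jt_1\rangle\subseteq\Cen(js)$ is immediate.

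The key step is the reverse inclusion. Given $h\in\Cen(js)\setminus\{1\}$, the Claim yields $jt_h$ cyclically reduced with $js,h\in\langle jt_h\rangle$; in particular $jt_h\in\Cen(js)$. Because $t_h\sim_j s$ we have $\Cen(jt_h)=\Cen(js)=\Cen(jt_1)$, so I apply the Claim a second time with $jt_1$ in the role of $rs$ and $jt_h$ in the role of $h$. This produces $jt'$ cyclically reduced with $jt_1,jt_h\in\langle jt'\rangle$ and length of $jt'$ equal to the gcd of $d$ and the length of $jt_h$, hence at most $d$. Since $jt'$ is itself in $\Cen(js)$, the minimality of $d$ forces the length of $jt'$ to equal $d$, and then cyclic reducedness together with Remark~\ref{r:cycl red}(iii) forces $jt_1=(jt')^{\pm 1}$. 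Therefore $\langle jt'\rangle=\langle jt_1\rangle\ni jt_h\ni h$, completing the cyclicity.

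The main obstacle is justifying this second application of the Claim: one must check that $jt_1$ (rather than $js$) is a legitimate input, which rests precisely on the equality $\Cen(jt_1)=\Cen(js)$ established at the start. Once $\Cen(js)=\langle jt_1\rangle$ is in hand, the ``in particular'' assertion about uniqueness of the minimal involution is a short check: any minimal $s_1\sim_j s$ has $js_1\in\langle jt_1\rangle$, so $js_1=(jt_1)^n$ for some $n\in\zz$, and the maximality of $\langle j,s_1\rangle$ built into the definition of minimality forces $n=\pm 1$, pinning $s_1$ down to $t_1$ modulo the natural symmetry $t_1\leftrightarrow t_1^j$, which generates the same dihedral subgroup and hence the same minimal dihedral structure.
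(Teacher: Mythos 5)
Your proof is correct and takes essentially the same route as the paper, which also derives this corollary by extracting the Claim from the proof of Lemma~\ref{l:euclid final} and letting an element of minimal length in ${\Cen}(js)$ serve as the generator $jt_1$. Your second application of the Claim just makes explicit the step the paper leaves implicit, and your handling of the $t_1\leftrightarrow t_1^{j}$ ambiguity matches how ``unique'' must be read there.
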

\begin{proof}
All the claims follow from the proof  of Lemma~\ref{l:euclid final}:
the proof shows that any element in the centralizer of $js$ has length at least $2$. Hence
an element of minimal length in the centralizer will yield the required minimal involution.
\end{proof}

Now we can also establish Property (6):

\begin{lemma}\label{l:6}
If $s\in G_1$ is an involution with $s\notin A_1tA_1$, then
there is a unique minimal involution $s'$ equivalent to $s$ relative to $j$.
\end{lemma}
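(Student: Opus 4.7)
The plan is to split on whether $s\in G$ or $s\in G_1\setminus G$.

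\medskip
\textbf{Case 1: $s\in G$.} Since $A\leq A_1$ we have $AtA\subseteq A_1tA_1$, so $s\notin AtA$, and hypothesis~(6) in $G$ supplies a unique minimal involution $s'\in G$ equivalent to $s$ in $G$. To promote uniqueness to $G_1$ it suffices to show the $j$-equivalence class of $s$ is unchanged, i.e.\ (via property~(5) in $G$ and Lemma~\ref{l:euclid final} in $G_1$) that $\Cen_{G_1}(js)\subseteq G$. If not, Lemma~\ref{l:centralizer in G} produces $b\in G$ with $(js)^b\in D_t\cup D_v$. If $(js)^b\in D_t$, Lemma~\ref{l:translation are conjugate} applied to the involutions $j^b,s^b$ (whose product lies in $\langle j,t\rangle$) furnishes $g\in G$ with $bg\in A$ and $s=t^{(bg)^{-1}}\in AtA$, contradiction. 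If $(js)^b\in D_v$, I would use $j^f=j$ and $t^f=v$ to write $D_v=f^{-1}D_tf$, so that $(js)^{bf^{-1}}\in D_t$ in $G_1$; then running the proof of Lemma~\ref{l:translation are conjugate} inside $G_1$ (the hypotheses (1)--(4) now being available in $G_1$ by earlier results in this section, in particular Corollary~\ref{c:property 5}) yields $g\in G_1$ with $bf^{-1}g\in A_1$ and $s=t^{(bf^{-1}g)^{-1}}\in A_1tA_1$, contradicting the hypothesis.

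\medskip
\textbf{Case 2: $s\in G_1\setminus G$.} By Lemma~\ref{l:cycl reduced translation} choose $h\in G_1$ with $(js)^h=rs_1$ cyclically reduced, where $r\in G$ and $s_1\in G_1$ are involutions. Applying Lemma~\ref{l:euclid final} together with the Euclidean-algorithm argument used to derive Corollary~\ref{c:translation centralizer}, with $r$ playing the role that $j$ plays there, produces an involution $t_1'\in G_1\setminus G$ such that $\Cen(rs_1)=\langle rt_1'\rangle$ is infinite cyclic. Conjugating back, $\Cen_{G_1}(js)=\langle w\rangle$ with $w=h(rt_1')h^{-1}$. On the other hand, by Lemma~\ref{l:euclid final} applied directly to $(j,s)$, every nontrivial element of $\Cen_{G_1}(js)$ is a translation of the form $js^*$ for some involution $s^*\sim_j s$; equivalently, $j$ inverts $js$ and hence inverts every element of $\langle w\rangle$, so $jw$ squares to~$1$ and is nontrivial (as $js$ has infinite order by~(2)), and $s^*:=jw$ is an involution with $w=js^*\sim_j s$. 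This $s^*$ is minimal because $\langle js^*\rangle=\langle w\rangle=\Cen_{G_1}(js)$ is the maximal cyclic subgroup of translations containing $js$. Any other minimal $s^{**}\sim_j s$ satisfies $\langle js^{**}\rangle=\langle w\rangle$, forcing $js^{**}\in\{w,w^{-1}\}$, so $s^{**}=s^*$ with the standard identification of $s^*$ and $(s^*)^j$ coming from the two generators of the cyclic centralizer.

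\medskip
The main obstacle will be the $D_v$-subcase in Case~1, where one must move the conjugator $b\in G$ from $D_v$ into $D_t$. This succeeds precisely because $f\in A_1$ realizes a dihedral isomorphism $D_t\to D_v$ that fixes $j$, allowing the absorption $b\mapsto bf^{-1}$, and because the $G_1$-analogue of Lemma~\ref{l:translation are conjugate} is already available once (1)--(4) have been verified in $G_1$. In Case~2 the delicate step is extracting the minimal involution $s^*$ from the generator of the cyclic centralizer; this relies on $j$ inverting every translation in the dihedral subgroups it meets.
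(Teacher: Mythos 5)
Your proof is correct and follows essentially the same route as the paper: the same split into $s\in G$ (handled via Lemma~\ref{l:centralizer in G} and the stability of the relevant centralizers/equivalence classes, which the paper compresses into a citation of Lemma~\ref{l:centralizer of translations}) and $s\in G_1\setminus G$ (handled via the Euclidean algorithm and Corollary~\ref{c:translation centralizer}). Your extra care in the $D_v$-subcase of Case~1 and your explicit identification of $s^*$ with $(s^*)^j$ only make explicit what the paper leaves implicit.
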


\begin{proof}
By conjugating we may assume that $js$ is cyclically reduced. If $s\in G$, then the result holds
 by Lemmas~\ref{l:centralizer in G} and \ref{l:centralizer of translations}.
If $s\in G_1\setminus G$, then $s\notin A_1tA_1$  and this is contained in 
Corollary~\ref{c:translation centralizer}.
\end{proof}

\section{The proof of Theorems \ref{thm s2t in char 0} and \ref{t:existence}}\label{s:maintheorems}

In this section we show how Theorems \ref{thm s2t in char 0}
and \ref{t:existence} of the
introduction follows from Proposition \ref{p:main}.

In order to obtain a new group from $G$ and $A$ in Proposition~\ref{p:main}, we need that $G$ is not already sharply $2$-transitive. Thus to get started, we can use the following preparatory lemma:

\begin{lemma}\label{l:makenontrans}
If $G_0$ is a group containing involutions $j,t$ and $t'$ such that
the assumptions of Theorem~\ref{thm s2t in char 0} hold, then
$G=G_0\ast \zz$ also satisfies the assumptions. If $G_0$ acts sharply $2$-transitively on the set of its involutions, then $G$ is not
$2$-transitive on the set of its involutions.
\end{lemma}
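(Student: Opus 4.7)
The plan is to push each of conditions (1)--(6) from $G_0$ to $G = G_0\ast\langle\tau\rangle$ by exploiting three standard facts about the free product. First, since $\zz$ is torsion-free, every finite subgroup of $G$ is conjugate into $G_0$, so in particular every involution of $G$ is $G$-conjugate to an involution of $G_0$. Second, the factor $G_0$ is malnormal in $G$, as one checks by inspecting reduced normal forms. Third, as a consequence, if some nontrivial power $g^n$ of $g\in G$ lies in $G_0$ then $g\in G_0$ itself (since $g^n\in G_0\cap g^{-1}G_0g$), and $\Cen_G(x)=\Cen_{G_0}(x)$ for every $1\neq x\in G_0$; in particular $\Cen_G(j)=A$, so the ambient centralizer is unchanged.

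With these in hand, properties (1)--(4) are immediate. Property~(1) is the first fact above. For~(2), if distinct involutions $s_1,s_2\in G$ generated a finite dihedral group, the Kurosh subgroup theorem would conjugate it into $G_0$, contradicting~(2) in $G_0$. For~(3) and~(4), any involution $s\in G$ with $(js)^m=(jt)^n$ nontrivial satisfies $js\in G_0$ by the third fact, hence $s\in G_0$, and the conclusion is provided by the corresponding property in $G_0$ together with $\Cen_G(j)=A$.

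Properties~(5) and~(6) are handled by cases on whether $s\in G_0$. If $s\in G_0$, then the same argument used for~(3) shows that every involution of $G$ equivalent to $s$ lies already in $G_0$, so $\overline{\lan j,s\ran}^G=\overline{\lan j,s\ran}^{G_0}$; combined with $\Cen_G(js)=\Cen_{G_0}(js)$, properties~(5) and~(6) transfer directly from $G_0$. If $s\notin G_0$, then $js$ is conjugate to a cyclically reduced element of $G$ of length at least two, and a normal-form analysis paralleling Section~\ref{s:1-6} identifies $\Cen_G(js)$ as an infinite cyclic group generated by $js''$ for a unique minimal involution $s''$ equivalent to $s$; this yields both~(5) and~(6) in this case.

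For the non-$2$-transitivity, observe that $G$ contains the involution $j^\tau$, whose reduced form has length three and which is therefore not in $G_0$. If $G$ were $2$-transitive on its involutions, some $g\in G$ would send the pair $(j,t)$ to $(j,j^\tau)$. But $j^g=j$ forces $g\in\Cen_G(j)=A\subset G_0$, whence $t^g\in G_0$, contradicting $j^\tau\notin G_0$. The main obstacle in the plan above is the $s\notin G_0$ case of (5)--(6), which requires carefully replaying the cyclic-reduction/Euclidean-algorithm style argument of Section~\ref{s:1-6} inside the free product.
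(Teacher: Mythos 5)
Your proof is correct and reaches the conclusion by essentially the route the paper intends: the paper's entire argument is the remark that (1)--(4) are preserved ``as before'' and that (5)--(6) are verified ``exactly as'' in Section~\ref{s:1-6}, with the free-product normal form playing the role of Britton's lemma. The one genuine difference is organizational, and it is a real simplification. Rather than rerunning the analogues of Lemma~\ref{l:nonconjugate}, Lemma~\ref{l:centralizer of translations} and Corollary~\ref{c:equivalence_class}, you observe once that the free factor $G_0$ is malnormal in $G_0\ast\zz$ (there being no associated subgroups to obstruct this), so that ${\Cen}_G(x)={\Cen}_{G_0}(x)$ for every $1\neq x\in G_0$ and every nontrivial root of a nontrivial element of $G_0$ already lies in $G_0$; properties (1)--(4) and the $s\in G_0$ half of (5)--(6) then fall out uniformly, whereas in the HNN setting each of these requires a separate lemma precisely because of the subgroups $D_t,D_v$. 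For $s\notin G_0$ you defer to a Section~\ref{s:1-6}-style normal-form analysis; that is exactly the level of detail the paper itself offers, and your reduction to the standard fact that the centralizer of a cyclically reduced element of syllable length at least two in a free product is infinite cyclic --- combined with the observation that $j$ inverts $js$, hence inverts the generator $z$ of that centralizer, so $z=js''$ for an involution $s''$ --- is the right way to finish. Your explicit non-$2$-transitivity argument ($j^\tau\notin G_0$ while any $g$ fixing $j$ lies in ${\Cen}_G(j)=A\leq G_0$, so $t^g\in G_0$ cannot equal $j^\tau$) is precisely what the paper's ``immediate'' compresses.
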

\begin{proof}
It follows as before
that conditions (1), (2), (3) and (4)  are preserved.
It remains to verify (5) and (6). The proofs are exactly 
as the corresponding proofs in Section~\ref{s:1-6}.
Note that ${\Cen}_G(j)={\Cen}_{G_0}(j)$, and the set of involutions
of $G_0$ is strictly contained in the corresponding set of $G$. So the
last claim is immediate.
\end{proof}

\begin{proof}(of Theorem~\ref{thm s2t in char 0})
Let $G, A$ and $j, t$ and $t'$ be as in Theorem \ref{thm s2t in char 0}. If
$G$ is already sharply $2$-transitive, consider $G\ast \zz$ instead.
Set $G_0:=G,\ A_0:=A$. 
We now use Proposition~\ref{p:main} to construct an ascending sequence of groups $G_i$ such that
with $A_i={\Cen}_{G_i}(j)$ the groups $G_i, A_i$ satisfy properties (1) -- (6)
of Theorem~\ref{thm s2t in char 0} and such that 
for each $i\ge 0,$ and each involution $v_i\in G_i, v_i\neq j$ there
exists an element $f_i\in A_{i+1}$ such that $t^{f_i}=v_{i}$.

Suppose the groups $G_k$ and their subgroups $A_k$ were constructed for
$k\leq i$. We then construct $G_{i+1}$ as the union of groups obtained from  applying 
Proposition~\ref{p:main} to each minimal involution $v_i\in G_i, v_i\neq j$.
Then for any involution $v\in G_i$ there exists an element $f\in A_{i+1}={\Cen}_{G_{i+1}}(j)$ with $t^f=v$. 

Put
$\calg=\bigcup_{i=0}^{\infty}G_i$.
Then ${\Cen}_\calg(j)=\cala:=\bigcup_{i=0}^{\infty}A_i$
and $\calg$ acts sharply $2$-transitive on the set of its involutions.
It is left to show that $\calg$ does not contain a non-trivial abelian
normal subgroup. Otherwise by a result of B. H. Neumann \cite{Neumann} such a subgroup would be of the form $\Tr(\calg)$, which is nonabelian by property (4)
whenever $\calg$ contains distinct involutions $j,s,t$ with $t$ not normalizing $\langle j,s\rangle$ and these exist in $\calg$ by construction.
\end{proof}

We finish the paper with the proof of Theorem~\ref{t:existence}:
\begin{proof}(of Theorem~\ref{t:existence})
The sharply $2$-transitive group $AGL(1,\qq)\cong  \qq^+\rtimes \qq^*$ and $A=\qq^*$
satisfies the assumptions of Theorem~\ref{thm s2t in char 0}. By Lemma~\ref{l:makenontrans} we may therefore apply Theorem~\ref{thm s2t in char 0} to
\[G=AGL(1,\qq)\ast \zz\]
and obtain a sharply $2$-transitive group without abelian normal subgroup.
\end{proof}


\end{document}